\documentclass[a4paper,11pt]{amsart}
\usepackage[a4paper,margin=1in]{geometry}
\usepackage{amsmath,amssymb,amsthm,mathtools}
\usepackage[T1]{fontenc}
\usepackage{lmodern}
\usepackage{microtype}
\usepackage{hyperref}

\hypersetup{colorlinks=true,linkcolor=blue,citecolor=blue,urlcolor=blue,pdftitle={Bene\v{s} and Shuffle-Exchange Counterexamples},pdfauthor={Przemek Chojecki}}

\newtheorem{theorem}{Theorem}
\newtheorem{lemma}[theorem]{Lemma}
\newtheorem{corollary}[theorem]{Corollary}
\newtheorem{proposition}[theorem]{Proposition}
\theoremstyle{remark}
\newtheorem{remark}[theorem]{Remark}

\newcommand{\Sym}{\operatorname{Sym}}
\newcommand{\calG}{\mathcal G}

\newcommand{\Z}{\mathbb Z}

\title{Bene\v{s} and Shuffle-Exchange Counterexamples}
\author[Przemek Chojecki]{Przemek Chojecki\\{\normalfont\small\lowercase{ulam.ai}}}
\date{July 1, 2026}

\begin{document}

\begin{abstract}
We give explicit counterexamples to two rearrangeability conjectures for shuffle-type networks. First, for every $N\ge2$ we construct a simple $N$-regular ordered two-stage graph $L_N$ with $F(L_N)=2$ and $R(L_N)\ge N$, refuting the graph-theoretic Bene\v{s} inequality $R(L)\le2F(L)$ and its partition-stabilizer form as stated on Open Problem Garden. We retain the sharp cut obstruction, exact mask-composition identity, exact middle criterion, first nontrivial-level result, and balanced-middle sufficient condition that explain which extra hypotheses can replace mere external connectivity. Second, for the standard directed shuffle-exchange network, we prove $d(k,3)=6$ for every $k\ge3$, while the known binary value is $d(2,3)=5$. Hence the shuffle-exchange conjecture $d(k,n)=2n-1$ fails already at $(k,n)=(3,3)$, and the remaining upper bound $d(k,n)\le3n-3$ for $k\ge3$ suggests $d(k,n)=3n-3$ as a natural replacement problem.
\end{abstract}
\maketitle

\section{The graph counterexample}

We use directed notation for ordered two-stage graphs. Thus an arrow $a\to b$ means an edge from the left copy of $a$ to the right copy of $b$. If $L$ is such a graph, $L^r$ denotes the $r$-fold concatenation with layers $0,1,\ldots,r$. A plain path in $L^r$ is a path from layer $0$ to layer $r$ using exactly one edge in each stage. The graph $L^r$ is externally connected if every source-target pair is joined by a plain path. If $L$ is $k$-regular, a mask for $L^r$ is a two-stage multigraph on the same sources and targets in which every source and every target has degree $k$. The graph $L^r$ is rearrangeable if every mask can be routed by mutually edge-disjoint plain paths. Write $F(L)$ for the least $r$ such that $L^r$ is externally connected, and $R(L)$ for the least $r$ such that $L^r$ is rearrangeable, with value $\infty$ if there is no such $r$. This is the graph-theoretic language for a conjectural extension of Bene\v{s}' rearrangeability problem; see \cite{Benes75,Hwang89,OPGgraph}.

\begin{theorem}\label{thm:graph}
For every integer $N\ge2$ there is a simple $N$-regular ordered two-stage graph $L_N$ such that $F(L_N)=2$ and $R(L_N)\ge N$.
\end{theorem}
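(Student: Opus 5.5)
The plan is to build $L_N$ from two blocks joined by a thin layer of edges. Every plain path that must cross from one block to the other then has to use one of these few crossing edges, and a counting argument over the stages forces $r\ge N$. The case $N=2$ is immediate once $F(L_2)=2$ is known, since a mask consisting of $k$ parallel edges between every source and some target cannot be routed in a graph that is not externally connected; hence $R(L)\ge F(L)$ always. So the real work is $N\ge3$.

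\textbf{Construction.} Take vertex set $C\sqcup D$ with $|C|=|D|=N$. Index $C=\{c_x\}$ and $D=\{d_x\}$ by $x\in\Z/N$. The edges are:
\begin{itemize}
\item $c_x\to c_y$ for $y\ne x+1$ and $d_x\to d_y$ for $y\ne x+1$, giving $N-1$ edges each;
\item one cross edge in each direction, $c_x\to d_{x+1}$ and $d_x\to c_{x+1}$.
\end{itemize}
The graph is simple and $N$-regular on both sides. The shift by $1$ (rather than using the identity matching) is chosen precisely so that $c_x$ can reach $d_{x+1}$, the target it would otherwise miss. The exact choice of shift is the part I may need to tune.

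\textbf{Step 1: $F(L_N)=2$.} First, $F\ne1$ because $c_x$ has no edge to most of $D$. For two steps, check the three kinds of pairs.
\begin{itemize}
\item \emph{From $c_x$ to $C$.} The in-block neighbours of $c_x$ are $N-1\ge2$ vertices $c_y$. Their out-neighbourhoods within $C$ are all of $C$ minus distinct singletons, so their union is all of $C$.
\item \emph{From $c_x$ to $D$.} The vertices $c_y$ with $y\ne x+1$ reach $d_{y+1}$, which covers all of $D$ except $d_{x+2}$. The extra step $c_x\to d_{x+1}\to d_z$ covers every $z\ne x+2$, so this route alone does not close the gap either.
\end{itemize}
This is the verification I expect to be the main obstacle: missing targets like $d_{x+2}$ appear exactly as in the naive version. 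If a single shift does not suffice, I would instead choose the within-block forbidden edge via a permutation $\pi$ and the cross matching via a permutation $\tau$, and require that $\tau(x)\notin$ the forbidden position at $\tau(x)$'s own step, i.e.\ $\pi(\tau(x))\ne\tau(x')$ for the relevant pairs. With $N\ge3$ there is enough freedom to find such $\pi,\tau$. A small-case check at $N=3$ would guide the choice, with the symmetric argument handling $D$.

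\textbf{Step 2: the cut obstruction $R(L_N)\ge N$.} Take the mask $M$ that joins every source $c_x$ in layer $0$ to every target $d_y$ in layer $r$ by a single edge. Every source and every target has degree $N$ in $M$; the remaining sources and targets, those in $D$ and in $C$ respectively, get the complete bipartite $D\to C$ mask. In a routing of $M$ in $L_N^r$, each of the $N^2$ paths from $C$ to $D$ must traverse some $C\to D$ edge in some stage. These paths are pairwise edge-disjoint. Each stage contains only $N$ edges from $C$ to $D$, so $rN\ge N^2$, giving $r\ge N$. Hence $L_N^r$ is not rearrangeable for $r<N$, which proves $R(L_N)\ge N$.
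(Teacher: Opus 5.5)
Your Step 2 is correct and is the paper's cut argument: only $N$ edges per stage cross from one block to the other, while the mask forces $N^2$ edge-disjoint crossing paths, so $rN\ge N^2$.

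The gap is in Step 1: the graph you actually define does not have $F(L_N)=2$. Your own computation shows this. In two steps, $c_x$ reaches $d_{y+1}$ for $y\ne x+1$ through $c_y$, and $d_z$ for $z\ne x+2$ through $d_{x+1}$. Both routes miss the same target $d_{x+2}$, so $F(L_N)\ge3$.

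For $N=2$ the situation is worse. There $c_0$ has out-neighbours $c_0,d_1$ and $d_1$ has out-neighbours $d_1,c_0$, so $\{c_0,d_1\}$ is closed and $F(L_2)=\infty$. Your reduction of $N=2$ to ``once $F(L_2)=2$ is known'' therefore rests on a false premise.

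The permutation fallback is not carried out, and the condition you state is not the relevant one. Write $\pi_C,\pi_D$ for the forbidden in-block maps and $\tau_C\colon C\to D$, $\tau_D\colon D\to C$ for the cross matchings. The in-block-first route from $c_x$ misses exactly $d_{\tau_C\pi_C(x)}$, and the cross-first route misses exactly $d_{\pi_D\tau_C(x)}$. So you need $\tau_C\pi_C(x)\ne\pi_D\tau_C(x)$ for all $x$, and symmetrically $\tau_D\pi_D(y)\ne\pi_C\tau_D(y)$. Uniform shifts by $1$ make the two sides equal identically, which is exactly the failure you observed. Until an explicit graph with $F=2$ is exhibited for every $N\ge2$, the theorem is not proved.

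The paper avoids this by concentrating the crossing edges at a hub instead of spreading them as a matching. It takes $x_0\to y_j$ and $y_0\to x_j$ for all $j$, and $x_i\to x_j$ and $y_i\to y_j$ for all $j$ when $i\ne0$. Each stage still has only $N$ crossing edges in each direction. The explicit paths $x_i\to x_0\to y_j$, $x_i\to x_1\to x_j$, $x_0\to y_0\to x_j$ and $x_0\to y_i\to y_j$ (with $i\ne0$, plus the symmetric ones) give $F=2$ for every $N\ge2$, including $N=2$.

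Your matching approach can also be salvaged for $N\ge3$ by decoupling the shifts. For instance, take $c_x\to c_y$ $(y\ne x+1)$, $c_x\to d_x$, $d_x\to d_y$ $(y\ne x+2)$ and $d_x\to c_x$. This satisfies both inequalities above, and your bullet-point argument (which needs $N-1\ge2$ for the in-block cases) then gives $F=2$. The case $N=2$ would still need a separate graph, such as the hub one.
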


\begin{proof}
Let $V=X\sqcup Y$, where $X=\{x_0,x_1,\ldots,x_{N-1}\}$ and $Y=\{y_0,y_1,\ldots,y_{N-1}\}$. Define $L_N$ by the four rules
\[
 x_0\to y_j\ (0\le j<N),\qquad x_i\to x_j\ (1\le i<N,\ 0\le j<N),
\]
\[
 y_0\to x_j\ (0\le j<N),\qquad y_i\to y_j\ (1\le i<N,\ 0\le j<N).
\]
There is at most one edge from any left vertex to any right vertex, so $L_N$ is simple. Each vertex has outdegree $N$. Each $x_j$ receives one edge from $y_0$ and one edge from each of $x_1,\ldots,x_{N-1}$, and each $y_j$ receives one edge from $x_0$ and one edge from each of $y_1,\ldots,y_{N-1}$; hence every indegree is $N$.

The graph $L_N$ is not externally connected in one stage, since $x_0$ has no edge to any $x_j$. It is externally connected in two stages. With $i\ne0$, paths of the forms $x_0\to y_0\to x_j$, $x_0\to y_i\to y_j$, $x_i\to x_0\to y_j$, and $x_i\to x_1\to x_j$ cover all cases with source in $X$, and the cases with source in $Y$ are symmetric. Thus $F(L_N)=2$.

For later use we note that $L_N^r$ is externally connected for every $r\ge2$. The case $r=2$ was just proved. For $r\ge3$, loops at each non-hub vertex allow padding. For example, $x_0$ reaches $x_j$ by $x_0\to y_1$, then loops at $y_1$ as needed, then uses $y_1\to y_0\to x_j$; $x_0$ reaches $y_j$ by looping at $y_1$ and then using $y_1\to y_j$; $x_i$ with $i\ne0$ reaches $x_j$ by looping at $x_1$ and then using $x_1\to x_j$; and $x_i$ with $i\ne0$ reaches $y_j$ by looping at $x_1$ and then using $x_1\to x_0\to y_j$. The $Y$-source cases are symmetric, and the number of loops is chosen to make the length exactly $r$.

It remains to prove the lower bound on $R(L_N)$. In each stage of $L_N^r$ the only edges from $X$ to $Y$ are $x_0\to y_j$ for $0\le j<N$, so across all $r$ stages there are exactly $rN$ such crossing edges. Consider the mask $M$ with $N$ parallel requests $x_i\to y_i$ for each $i$ and $N$ parallel requests $y_i\to x_i$ for each $i$. This is a valid $N$-regular mask. The $X$-to-$Y$ half of $M$ consists of $N^2$ requests, and every corresponding plain path must use at least one $X$-to-$Y$ crossing edge. Since a routing uses mutually edge-disjoint paths, $L_N^r$ cannot route $M$ when $rN<N^2$. Hence $L_N^r$ is not rearrangeable for every $r<N$, and $R(L_N)\ge N$.
\end{proof}

The Open Problem Garden graph formulation of the Bene\v{s} conjecture asks whether, for a simple regular ordered two-stage graph $L$, external connectivity of $L^m$ implies rearrangeability of $L^{2m}$; equivalently, whether $R(L)\le 2F(L)$ \cite{OPGgraph}. Taking $N\ge5$ in Theorem \ref{thm:graph} gives $F(L_N)=2$ and $R(L_N)\ge N>4=2F(L_N)$, so that formulation is false. In fact $F(L_N)$ is fixed while $R(L_N)$ is unbounded, so no upper bound for $R(L)$ can depend only on $F(L)$ in the class of all simple regular ordered two-stage graphs.

\section{Partitions and stabilizers}

Let $\mathbf h_0,\ldots,\mathbf h_r$ be partitions of a finite set $E$. The associated layered graph $G(\mathbf h_0,\ldots,\mathbf h_r)$ has layer $i$ equal to the set of blocks of $\mathbf h_i$; for each $i=1,\ldots,r$ and each $e\in E$, it has one edge labelled $e$ from the block of $\mathbf h_{i-1}$ containing $e$ to the block of $\mathbf h_i$ containing $e$. For a partition $\mathbf h$, let $S(\mathbf h)$ be its setwise block stabilizer in $\Sym(E)$. Permutations act on the left, and products are composed right to left.

\begin{lemma}\label{lem:routing}
Let $\pi\in\Sym(E)$. Form the mask $M(\pi)$ from the blocks of $\mathbf h_0$ to the blocks of $\mathbf h_r$ by giving each $e\in E$ a request from the $\mathbf h_0$-block containing $e$ to the $\mathbf h_r$-block containing $\pi(e)$. Then $M(\pi)$ is routable in $G(\mathbf h_0,\ldots,\mathbf h_r)$ if and only if $\pi\in S(\mathbf h_r)\cdots S(\mathbf h_0)$.
\end{lemma}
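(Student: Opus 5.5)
The plan is to translate a routing into a sequence of bijections on $E$ and back. In $G(\mathbf h_0,\ldots,\mathbf h_r)$ the edges of stage $i$ are labelled bijectively by $E$. A plain path using edge labels $e_1,\ldots,e_r$ is therefore valid exactly when, for each $1\le i<r$, the labels $e_i$ and $e_{i+1}$ lie in the same block of $\mathbf h_i$. Its source is the $\mathbf h_0$-block of $e_1$ and its target is the $\mathbf h_r$-block of $e_r$. The mask $M(\pi)$ has exactly $|E|$ requests, one per element of $E$. A routing by edge-disjoint paths must therefore use every edge label exactly once in each stage, since each stage has $|E|$ edges and each path uses one of them.

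Given a routing, I index the paths by the elements $e\in E$, with path $P_e$ serving the request attached to $e$. I let $\sigma_i(e)$ be the label of the stage-$i$ edge of $P_e$. Each $\sigma_i$ is then a permutation of $E$, and I set $\sigma_0=\mathrm{id}$. The source condition says $\sigma_1(e)$ lies in the same $\mathbf h_0$-block as $e$, so $\sigma_1\sigma_0^{-1}\in S(\mathbf h_0)$. Consecutiveness says $\sigma_{i+1}\sigma_i^{-1}\in S(\mathbf h_i)$, because it maps each $\sigma_i(e)$ to an element of the same $\mathbf h_i$-block. The target condition says $\sigma_r(e)$ and $\pi(e)$ lie in the same $\mathbf h_r$-block, so $\pi\sigma_r^{-1}\in S(\mathbf h_r)$. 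Telescoping gives
\[
\pi=(\pi\sigma_r^{-1})(\sigma_r\sigma_{r-1}^{-1})\cdots(\sigma_2\sigma_1^{-1})(\sigma_1\sigma_0^{-1})\in S(\mathbf h_r)S(\mathbf h_{r-1})\cdots S(\mathbf h_0),
\]
with exactly one factor from each $S(\mathbf h_i)$, in the stated right-to-left order.

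For the converse, I write $\pi=g_r\cdots g_0$ with $g_i\in S(\mathbf h_i)$ and define $\sigma_i=g_{i-1}\cdots g_0$ for $1\le i\le r$. The path for request $e$ then uses labels $\sigma_1(e),\ldots,\sigma_r(e)$. The same three checks, read backwards, show it is a valid plain path from the $\mathbf h_0$-block of $e$ to the $\mathbf h_r$-block of $g_r\sigma_r(e)=\pi(e)$. The paths are edge-disjoint because each $\sigma_i$ is injective.

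The main point needing care is the bookkeeping with multiplicities. $M(\pi)$ is a multigraph, and a routing assigns paths to requests only up to permuting parallel requests, so the matching of paths to elements $e$ is not canonical. Any choice works, however, because parallel requests have the same endpoint blocks. Changing the choice alters $\pi$ only by factors already absorbed in $S(\mathbf h_0)$ and $S(\mathbf h_r)$, so the statement is insensitive to it. I would also confirm the edge-count argument: every label is used exactly once per stage because there are $|E|$ paths and $|E|$ edges per stage. This is what makes each $\sigma_i$ a genuine permutation.
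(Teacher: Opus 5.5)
Your proof is correct and is essentially the paper's argument. Your cumulative labels $\sigma_i$ are the paper's $w_i$ (equivalently $z_{i-1}$), and your telescoping factors $\sigma_1$, $\sigma_{i+1}\sigma_i^{-1}$, $\pi\sigma_r^{-1}$ are exactly the permutations $\alpha_0,\alpha_i,\alpha_r$ the paper defines; one wording fix is that reassigning paths among parallel requests never alters $\pi$, only the resulting factorization, which is why any choice works.
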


\begin{proof}
Suppose first that $\pi=\alpha_r\cdots\alpha_0$ with $\alpha_i\in S(\mathbf h_i)$. For $e\in E$, put $z_i(e)=\alpha_i\cdots\alpha_0(e)$. The stage-$i$ edge labelled $z_{i-1}(e)$ starts in the $\mathbf h_{i-1}$-block of $z_{i-1}(e)$ and ends in the $\mathbf h_i$-block of $z_{i-1}(e)$, which is also the $\mathbf h_i$-block of $z_i(e)$ because $\alpha_i$ fixes each $\mathbf h_i$-block setwise. These edges form a plain path from the $\mathbf h_0$-block of $e$ to the $\mathbf h_r$-block of $\pi(e)$. At each stage the labels $z_{i-1}(e)$ are all distinct as $e$ varies, so the paths are edge-disjoint.

Conversely, suppose $M(\pi)$ is routed. For each $e\in E$, let $w_i(e)$ be the label of the routed edge used between layers $i-1$ and $i$. For fixed $i$ these labels are distinct. Define $\alpha_0(e)=w_1(e)$, define $\alpha_i(w_i(e))=w_{i+1}(e)$ for $1\le i<r$, and define $\alpha_r(w_r(e))=\pi(e)$. These maps are permutations of $E$. The path condition implies that each pair of elements matched by $\alpha_i$ lies in the same block of $\mathbf h_i$, with the analogous statements at the first and last layers; hence $\alpha_i\in S(\mathbf h_i)$. The composition is $\pi=\alpha_r\cdots\alpha_0$.
\end{proof}

When $\mathbf h_0$ and $\mathbf h_r$ are uniform of the same block size $k$, every $k$-regular mask arises as $M(\pi)$ for some permutation $\pi$: split each initial block according to the outgoing demands, and then biject the elements assigned to a final block onto that block. Hence, in this uniform situation, $G(\mathbf h_0,\ldots,\mathbf h_r)$ is rearrangeable if and only if the sequence of partitions is complete, up to reversal of the product order; reversal is harmless for equality with $\Sym(E)$, since the inverse of a full product is full.

The analogous one-path statement gives transitivity. Indeed, a plain path with labels $w_1,\ldots,w_r$ from the $\mathbf h_0$-block of $e$ to the $\mathbf h_r$-block of $f$ can be extended to permutations $\alpha_i\in S(\mathbf h_i)$ with $\alpha_0(e)=w_1$, $\alpha_i(w_i)=w_{i+1}$, and $\alpha_r(w_r)=f$; conversely, the first half of the proof of Lemma \ref{lem:routing} turns such a product into a plain path. Therefore external connectivity is equivalent to transitivity of the same stabilizer product, again up to reversal, since a set of permutations is transitive if and only if its inverse set is transitive.

Now let $L_N$ be the graph from Theorem \ref{thm:graph}, and let $E$ be its edge set. Thus $|E|=2N^2$. For each vertex $v\in V$, put $U_v=\{e\in E:e\text{ leaves }v\}$ and $I_v=\{e\in E:e\text{ enters }v\}$. The partition $\mathbf u=\{U_v:v\in V\}$ is uniform, with $2N$ blocks of size $N$. Since every indegree is $N$, choose bijections $\varphi_v:U_v\to I_v$ and combine them into a permutation $\varphi$ of $E$. Then $\varphi(U_v)=I_v$ for every $v$.

Because $L_N$ is simple, $|U_a\cap I_b|\le1$ for all $a,b\in V$, and hence $\mathbf u\wedge\varphi(\mathbf u)=\mathbf0$. Moreover, the graph between the partitions $\varphi^i(\mathbf u)$ and $\varphi^{i+1}(\mathbf u)$ is canonically isomorphic to $L_N$: applying $\varphi^{-i}$ sends $\varphi^i(U_a)\cap\varphi^{i+1}(U_b)$ to $U_a\cap I_b$.

\begin{corollary}\label{cor:partition}
For every integer $n\ge3$ there exist a finite set $E$, a uniform partition $\mathbf u$ of $E$, and a permutation $\varphi\in\Sym(E)$ such that $\mathbf u\wedge\varphi(\mathbf u)=\mathbf0$ and $(\varphi S(\mathbf u))^n$ is transitive, but $\Sym(E)\ne(\varphi S(\mathbf u))^{2n-1}$.
\end{corollary}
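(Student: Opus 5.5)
We take $L_N$ from Theorem \ref{thm:graph}, with $E$, $\mathbf u$ and $\varphi$ exactly as constructed just before the statement. We then choose $N$ large relative to $n$; concretely $N=2n$ will do. We already know $\mathbf u\wedge\varphi(\mathbf u)=\mathbf0$ from simplicity. The work is to identify $(\varphi S(\mathbf u))^m$ with a stabilizer product of the form in Lemma \ref{lem:routing}, for the partitions $\mathbf h_i=\varphi^i(\mathbf u)$.

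\emph{Step 1 (rewriting the product).} Conjugation gives $S(\varphi^i(\mathbf u))=\varphi^i S(\mathbf u)\varphi^{-i}$. Hence
\[
(\varphi S(\mathbf u))^m=\varphi^m\,\bigl(\varphi^{-(m-1)}S(\mathbf u)\varphi^{m-1}\bigr)\cdots\bigl(\varphi^{-1}S(\mathbf u)\varphi\bigr)S(\mathbf u),
\]
which equals $\varphi^m\,S(\varphi^{-(m-1)}\mathbf u)\cdots S(\varphi^{-1}\mathbf u)S(\mathbf u)$. Left multiplication by the single permutation $\varphi^m$ preserves both transitivity and equality with $\Sym(E)$. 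So the question reduces to the product $S(\mathbf h_{m-1})\cdots S(\mathbf h_0)$ with $\mathbf h_i=\varphi^{-i}(\mathbf u)$. One could equally reindex with positive powers and take inverses, since the paper notes that reversal is harmless for both properties.

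\emph{Step 2 (identifying the layered graph).} The remark before the corollary says that the graph between $\varphi^{i}(\mathbf u)$ and $\varphi^{i+1}(\mathbf u)$ is canonically a copy of $L_N$. With negative powers the consecutive graphs are copies of $L_N$ or of its reverse, and we fix the orientation so that they are copies of $L_N$. Then $G(\mathbf h_0,\ldots,\mathbf h_{m-1})$ is $L_N^{m-1}$, meaning $m$ layers and $m-1$ stages. We must check that the canonical isomorphisms of consecutive stages agree on the shared layers. They do, because every identification goes through $U_v\mapsto\varphi^i(U_v)$, so each layer is indexed by $V$ compatibly.

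The indexing is therefore: a product of $m$ stabilizers corresponds to $m-1$ stages. By the transitivity remark and Lemma \ref{lem:routing} (uniform case), we get two equivalences:
\begin{itemize}
\item $(\varphi S(\mathbf u))^m$ is transitive exactly when $L_N^{m-1}$ is externally connected;
\item $(\varphi S(\mathbf u))^m=\Sym(E)$ exactly when $L_N^{m-1}$ is rearrangeable.
\end{itemize}

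\emph{Step 3 (conclusion).} Take $m=n$. Since $n\ge3$, we have $n-1\ge2$, and Theorem \ref{thm:graph} (its proof) shows $L_N^r$ is externally connected for every $r\ge2$. Hence $(\varphi S(\mathbf u))^n$ is transitive. Now take $m=2n-1$, giving $2n-2$ stages. The proof of Theorem \ref{thm:graph} shows $L_N^r$ is not rearrangeable for $r<N$. With $N=2n>2n-2$ this gives $(\varphi S(\mathbf u))^{2n-1}\ne\Sym(E)$. The hypothesis $n\ge3$ is exactly what makes $n-1\ge F(L_N)=2$.

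The main obstacle is Step 2: getting the off-by-one count of stages versus factors right, and the orientation (positive versus negative powers of $\varphi$) so that the stages are genuinely copies of $L_N$ and not its reverse. If the orientation comes out reversed, I would repeat the argument for the reversed graph. The reverse of $L_N$ has the same structure with the roles of hubs swapped: edges into $x_0$-type vertices are few. The cut argument with $X$-to-$Y$ crossings and the padded connectivity go through verbatim. Alternatively, I would pass to inverses, using that transitivity and fullness are invariant under inversion.
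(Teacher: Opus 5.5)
Your proposal is correct and is essentially the paper's argument: you rewrite $(\varphi S(\mathbf u))^m$ by conjugation as a fixed permutation times a stabilizer product over the partitions $\varphi^{\pm i}(\mathbf u)$, you translate through Lemma \ref{lem:routing} and the one-path remark into external connectivity of $L_N^{n-1}$ and non-rearrangeability of $L_N^{2n-2}$, and you take $N>2n-2$. One point in Step 2 needs correcting: with your choice $\mathbf h_i=\varphi^{-i}(\mathbf u)$ the stages are genuinely copies of the reversed graph (an edge from $\varphi^{-i}(U_a)$ to $\varphi^{-i-1}(U_b)$ corresponds to $b\to a$ in $L_N$), so there is no orientation to ``fix''; instead commit to the inverse trick you mention, $\bigl(S(\mathbf h_{m-1})\cdots S(\mathbf h_0)\bigr)^{-1}=S(\mathbf h_0)\cdots S(\mathbf h_{m-1})$, which after conjugating by $\varphi^{m-1}$ is exactly the stabilizer product for $\mathbf u,\varphi(\mathbf u),\ldots,\varphi^{m-1}(\mathbf u)$, hence for $L_N^{m-1}$, and this is the same ``up to reversal'' step the paper uses.
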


\begin{proof}
Choose $N>2n-2$ and construct $E,\mathbf u,\varphi$ from $L_N$ as above. Put $S_i=S(\varphi^i(\mathbf u))$. Since $\varphi S_0\varphi^{-1}=S_1$, induction gives $(\varphi S_0)^m=S_1S_2\cdots S_m\varphi^m$ for every $m\ge1$. Therefore $(\varphi S_0)^m$ is transitive if and only if $S_1S_2\cdots S_m$ is transitive, equivalently if and only if $S_0S_1\cdots S_{m-1}$ is transitive. Likewise, $(\varphi S_0)^m=\Sym(E)$ if and only if $S_0S_1\cdots S_{m-1}=\Sym(E)$.

Since $L_N^{n-1}$ is externally connected, the partition sequence $\mathbf u,\varphi(\mathbf u),\ldots,\varphi^{n-1}(\mathbf u)$ has a transitive stabilizer product. Hence $(\varphi S(\mathbf u))^n$ is transitive. On the other hand, $L_N^{2n-2}$ cannot route the mask used in the proof of Theorem \ref{thm:graph}, because $2n-2<N$. That mask is $N$-regular, equal to the common block size, so it is $M(\pi)$ for some permutation $\pi$ of $E$. By Lemma \ref{lem:routing}, $\pi$ does not belong to the relevant stabilizer product, and therefore the partition sequence $\mathbf u,\varphi(\mathbf u),\ldots,\varphi^{2n-2}(\mathbf u)$ is not complete. Thus $S_0S_1\cdots S_{2n-2}\ne\Sym(E)$, and $(\varphi S(\mathbf u))^{2n-1}\ne\Sym(E)$.
\end{proof}

The partition formulation on Open Problem Garden states precisely the implication contradicted by Corollary \ref{cor:partition} \cite{OPGpart}; the UnsolvedMath entry OPG-37181 mirrors that problem as an open problem in the OpenGarden set \cite{UM37181}. The construction above therefore refutes those formulations as written. It does not address the special shuffle-exchange conjecture, whose transition graphs have much stronger homogeneity than the bottleneck family used here.

\section{Additional hypotheses}

The examples fail for a structural reason. Rearrangeability forces cut-capacity conditions that external connectivity does not see. The next proposition is both necessary and sharp in the following sense: if a displayed inequality fails, the proof constructs a particular mask that cannot be routed.

\begin{proposition}[Sharp cut obstruction]\label{prop:cut}
Let $G$ be a $k$-regular multistage graph with source set $A_0$ and target set $A_r$, with $|A_0|=|A_r|$. If $G$ is rearrangeable, then every edge cut separating a source subset $P\subseteq A_0$ from a target subset $Q\subseteq A_r$---that is, every set of edges whose deletion leaves no plain path from $P$ to $Q$---has size at least $k\min(|P|,|Q|)$. Conversely, if such a cut has smaller size, then there is a $k$-regular mask not routable in $G$.
\end{proposition}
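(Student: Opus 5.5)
The plan is to prove the contrapositive: if some cut $C$ separating $P$ from $Q$ has $|C|<k\min(|P|,|Q|)$, we exhibit a $k$-regular mask that $G$ cannot route. This gives both directions at once, since rearrangeability means every $k$-regular mask is routable. The mask will send as many requests as possible from $P$ to $Q$. Every plain path realizing such a request must use at least one edge of $C$, because deleting $C$ leaves no plain path from $P$ to $Q$. Edge-disjointness then bounds the number of these requests by $|C|$.

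The key step is the construction of the mask. Put $m=\min(|P|,|Q|)$. Choose $P'\subseteq P$ and $Q'\subseteq Q$ with $|P'|=|Q'|=m$, and fix a bijection $\sigma:P'\to Q'$. For each $p\in P'$, place $k$ parallel requests $p\to\sigma(p)$. Then every vertex of $P'$ has outdegree $k$ and every vertex of $Q'$ has indegree $k$.

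It remains to complete this to a full $k$-regular mask on $A_0\setminus P'$ and $A_r\setminus Q'$. These two sets have equal size because $|A_0|=|A_r|$. Choose any bijection $\tau$ between them and add $k$ parallel requests along each pair of $\tau$. The resulting two-stage multigraph has every source and target of degree exactly $k$, so it is a valid mask in the paper's sense; it is of the same shape as the mask used in the proof of Theorem \ref{thm:graph}. Its $P'$-to-$Q'$ part consists of $km$ requests.

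Now suppose this mask is routed by mutually edge-disjoint plain paths. Each of the $km$ paths from $P'$ to $Q'$ is a plain path from $P$ to $Q$, so it contains an edge of $C$. Assigning to each such path one of its $C$-edges gives an injection into $C$, by edge-disjointness. Hence $km\le|C|$, contradicting $|C|<km$. So the mask is unroutable and $G$ is not rearrangeable, which proves both the necessity statement and its sharp converse.

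No step here is a real obstacle. The only care needed is checking that the padding yields a legitimate mask, meaning parallel requests are allowed and degrees are exactly $k$ on both sides. This uses the hypothesis $|A_0|=|A_r|$, together with the fact that masks are multigraphs, so no simplicity constraint interferes. I would also remark that only requests between $P$ and $Q$ are constrained, so the choice of $\tau$ is irrelevant.
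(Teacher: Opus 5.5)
Your proof is correct and takes the same approach as the paper. Both build a $k$-regular mask in which $k\min(|P|,|Q|)$ requests run from $P$ into $Q$, and edge-disjointness then forces that many distinct cut edges; your $k$-fold bijections $\sigma$ and $\tau$ simply make explicit the paper's instruction to fill the remaining source and target slots arbitrarily.
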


\begin{proof}
Assume $|P|\le |Q|$. Construct a $k$-regular mask whose $k|P|$ requests out of $P$ all end in $Q$: fill $k|P|$ of the available target slots in $Q$, and then fill all remaining source and target slots arbitrarily, allowing parallel edges. Every routing of this mask must send those $k|P|$ paths across any cut separating $P$ from $Q$, so the cut has size at least $k|P|$. If a smaller cut exists, this same mask is unroutable. The case $|Q|\le |P|$ is identical, using all $k|Q|$ target slots in $Q$ and filling them with requests from $P$.
\end{proof}

For $L_N^r$, take $P=X$ and $Q=Y$. The cut consisting of all $X$-to-$Y$ crossing edges has size $rN$, whereas Proposition \ref{prop:cut} requires $N^2$. This is exactly the obstruction in Theorem \ref{thm:graph}.

\begin{corollary}\label{cor:noexternal}
For every $m\ge2$ and every $q\ge1$ there is a simple regular ordered two-stage graph $L$ such that $L^m$ is externally connected, but $L^q$ is not rearrangeable.
\end{corollary}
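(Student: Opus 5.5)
We take $L=L_N$ from Theorem \ref{thm:graph}, with $N$ chosen large compared with $q$; the choice $N>q$ will do. The argument has three steps.

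\textbf{External connectivity.} The proof of Theorem \ref{thm:graph} showed that $L_N^r$ is externally connected for every $r\ge2$: the case $r=2$ is direct, and for $r\ge3$ one pads with loops at the non-hub vertices $x_1$ or $y_1$. Since $m\ge2$, the graph $L_N^m$ is therefore externally connected. Simplicity and $N$-regularity of $L_N$ were also established in that proof.

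\textbf{Failure of rearrangeability.} The simplest route is to apply the sharp cut obstruction, Proposition \ref{prop:cut}, to $G=L_N^q$, which is $N$-regular with source and target layers equal to $V$. Take $P=X$ at layer $0$ and $Q=Y$ at layer $q$. Every plain path from $X$ to $Y$ must cross from $X$ to $Y$ at some stage. The only such edges are the $N$ edges $x_0\to y_j$ in each stage, so they form a cut of size $qN$. The proposition requires a cut of size at least $N\min(|X|,|Y|)=N^2$, and $qN<N^2$ because $q<N$. Hence $L_N^q$ is not rearrangeable. Equivalently, this is exactly the statement $R(L_N)\ge N>q$ from Theorem \ref{thm:graph}, since $R(L_N)\ge N$ means $L_N^q$ fails to be rearrangeable for all $q<N$.

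\textbf{Where care is needed.} No step is hard. The only point to check is that non-rearrangeability is read off for the specific $q$, not merely through the minimal value $R$. The argument above does this directly: the counting bound $qN<N^2$ holds for each individual $q<N$, so no monotonicity of rearrangeability in $q$ is needed. Note also that $q$ may be larger than $m$; this is harmless because $N$ is chosen after both $m$ and $q$ are fixed.
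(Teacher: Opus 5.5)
Your proposal is correct and is essentially the paper's proof. The paper also takes $L=L_N$ with $N>q$, cites the external connectivity of $L_N^r$ for all $r\ge2$, and gets non-rearrangeability of $L_N^q$ from $R(L_N)\ge N>q$; that bound rests on the same crossing-edge count $qN<N^2$ that you redo through Proposition~\ref{prop:cut}.
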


\begin{proof}
Choose $N>q$ and take $L=L_N$. Since $m\ge2$, Theorem \ref{thm:graph} gives external connectivity of $L_N^m$, while $R(L_N)\ge N>q$ gives non-rearrangeability of $L_N^q$.
\end{proof}

Thus any corrected general conjecture must add a robust cut-capacity or routing hypothesis, or else restrict to a substantially more homogeneous class of networks. External connectivity alone cannot imply rearrangeability with any bound depending only on the external-connectivity exponent.

Before imposing checkable sufficient hypotheses, it is useful to record the exact algebra of masks. If $G$ is a $k$-regular multistage graph from $A$ to $B$, let $\mathcal R(G)$ be the set of $k$-regular masks from $A$ to $B$ that are routable in $G$, and let $\mathcal M_k(A,B)$ be the set of all $k$-regular masks from $A$ to $B$. If $\mathcal A\subseteq \mathcal M_k(A,B)$ and $\mathcal B\subseteq \mathcal M_k(B,C)$, define $\mathcal A\star\mathcal B$ as follows. A mask $M$ lies in $\mathcal A\star\mathcal B$ if it is obtained from some $P\in\mathcal A$ and $Q\in\mathcal B$ by, for each $b\in B$, pairing bijectively the $k$ requests of $P$ ending at $b$ with the $k$ requests of $Q$ starting at $b$, and replacing each paired pair $a\to b$, $b\to c$ by one request $a\to c$. This operation is the balanced composition of masks through the middle set $B$.

\begin{theorem}[Mask-composition identity]\label{thm:composition}
Let $G$ and $H$ be $k$-regular multistage graphs from $A$ to $B$ and from $B$ to $C$, respectively, with all layers finite and of the same cardinality. Then
\[
 \mathcal R(GH)=\mathcal R(G)\star\mathcal R(H).
\]
\end{theorem}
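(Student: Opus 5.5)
We prove the two inclusions separately. Throughout, $GH$ is the concatenation in which the target layer $B$ of $G$ is identified with the source layer of $H$. A plain path in $GH$ is therefore exactly a plain path in $G$ ending at some $b\in B$ followed by a plain path in $H$ starting at $b$.

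For $\mathcal R(GH)\subseteq\mathcal R(G)\star\mathcal R(H)$, start from a routable mask $M$ with a routing by edge-disjoint plain paths. Cut each path at the middle layer $B$. Recording the $G$-segment as a request $a\to b$ gives a mask $P$, and recording the $H$-segment as a request $b\to c$ gives a mask $Q$. We must show $P$ is $k$-regular, and this is where regularity of the graphs enters. In $G$, each vertex $b\in B$ of the last layer has indegree exactly $k$. The routing of $M$ consists of $k|A|=k|B|$ edge-disjoint paths, so it uses exactly $k|B|$ edges in the last stage of $G$. That is all of them, since the last stage of a $k$-regular graph between equal-size layers has exactly $k|B|$ edges. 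Hence exactly $k$ paths pass through each $b$. Sources of $P$ have degree $k$ because $M$ does, so $P\in\mathcal M_k(A,B)$, and similarly $Q\in\mathcal M_k(B,C)$. The segments route $P$ in $G$ and $Q$ in $H$ edge-disjointly. Pairing, at each $b$, the $P$-request and $Q$-request that come from the same path of $M$ is a bijection between the $k$ arrivals and $k$ departures at $b$. Composing along this pairing recovers $M$, so $M\in\mathcal R(G)\star\mathcal R(H)$.

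For the reverse inclusion, take $P\in\mathcal R(G)$ and $Q\in\mathcal R(H)$ with routings, and fix the pairings at each $b$ that produce $M$. For each paired pair, concatenate the routed $G$-path of the request $a\to b$ with the routed $H$-path of the request $b\to c$. This gives a plain path in $GH$ from $a$ to $c$. Edge-disjointness holds stagewise: in the $G$-stages the edges are distinct because the $P$-routing is edge-disjoint and each $P$-request is used exactly once, and likewise in the $H$-stages. One detail needs care. Parallel requests of $P$ (say, several copies of $a\to b$) must be identified with individual routed paths, so we treat a mask as a multiset of labeled requests and the routing as a bijection from requests to paths. The pairing in the definition of $\star$ is then a bijection between labeled requests, and everything is well defined.

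The main obstacle is the regularity count in the first inclusion. It needs the last stage of $G$ to be saturated by any routing of a $k$-regular mask, which uses that all layers have the same cardinality and that every vertex has in- and outdegree $k$. We check this carefully, including for multigraphs. If one preferred an algebraic route, Lemma~\ref{lem:routing} would give the same result for graphs of the form $G(\mathbf h_0,\ldots,\mathbf h_r)$, through the factorization of stabilizer products at the middle partition. The direct path-splitting argument is more general, so we use it.
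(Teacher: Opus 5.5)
Your proposal is correct and follows the paper's proof essentially step for step. You cut the routed paths at $B$, use saturation of each stage (from equal layer sizes and $k$-regularity) to get $k$-regular halves, and for the converse concatenate the half-routings along the pairing; your treatment of parallel requests as labeled requests just makes explicit what the paper's ``label the requests of $M$ by their routed paths'' leaves implicit.
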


\begin{proof}
Let $M$ be routed in $GH$, and label the requests of $M$ by their routed paths. Cutting each path at the middle layer $B$ gives a left mask $P$ from $A$ to $B$, a right mask $Q$ from $B$ to $C$, and a pairing at each middle vertex. The masks $P$ and $Q$ are routable in $G$ and $H$. They are also $k$-regular: a $k$-regular mask has $k|A|$ requests, and each stage of a $k$-regular multistage graph with layers of size $|A|$ has $k|A|$ edges, so an edge-disjoint routing uses every edge in every stage; in particular exactly $k$ left halves end at each vertex of $B$ and exactly $k$ right halves start there. Thus $M\in\mathcal R(G)\star\mathcal R(H)$.

Conversely, suppose $M$ is obtained by balanced composition from $P\in\mathcal R(G)$ and $Q\in\mathcal R(H)$. Choose edge-disjoint routings of $P$ and $Q$. For each paired pair of requests $a\to b$ and $b\to c$, concatenate the corresponding left and right paths. The two routings occupy disjoint sets of stages, and each half-routing is edge-disjoint within its own stages, so the concatenated paths are edge-disjoint and route $M$ in $GH$.
\end{proof}

\begin{corollary}[Logically sharp doubled criterion]\label{cor:sharpdoubling}
Let $L$ be a simple $k$-regular ordered two-stage graph on $V$, and let $m\ge1$. Then $L^{2m}$ is rearrangeable if and only if
\[
 \mathcal M_k(V,V)=\mathcal R(L^m)\star\mathcal R(L^m).
\]
Equivalently, the weakest possible additional hypothesis, among hypotheses expressed only by balanced factorization through the middle layer, is that every $k$-regular endpoint mask admit such a factorization into two masks routable in $L^m$.
\end{corollary}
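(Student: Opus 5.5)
The plan is to apply Theorem~\ref{thm:composition} with $G=H=L^m$, so that $GH=L^{2m}$, and then unwind the definition of rearrangeability. First I check the hypotheses of the theorem. Every layer of $L^m$ is a copy of $V$, so all layers are finite and have the same cardinality $|V|$. Each stage is a copy of $L$, which is $k$-regular. Hence $L^m$ is a $k$-regular multistage graph from $V$ to $V$, and the concatenation $L^m L^m$ is literally $L^{2m}$, since it has layers $0,\ldots,2m$ and each stage is a copy of $L$. The theorem then gives
\[
 \mathcal R(L^{2m})=\mathcal R(L^m)\star\mathcal R(L^m).
\]

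Next I unwind the definition. By definition, $L^{2m}$ is rearrangeable exactly when every $k$-regular mask from layer $0$ to layer $2m$ is routable, that is, when $\mathcal R(L^{2m})=\mathcal M_k(V,V)$. We always have $\mathcal R(L^{2m})\subseteq\mathcal M_k(V,V)$, so substituting the displayed identity yields the stated equivalence directly.

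For the ``equivalently'' clause I would observe the following. Any hypothesis stated purely in terms of balanced factorization through the middle layer, requiring certain masks to lie in $\mathcal R(L^m)\star\mathcal R(L^m)$, can only guarantee rearrangeability if it forces every $k$-regular mask to lie in that set. Conversely, the condition that every mask factors is itself sufficient by the first part. So this condition is both necessary and sufficient, and therefore it is the weakest hypothesis of that form.

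There is no genuine obstacle here. The one point needing care is that the balanced composition $\star$ is well defined on $k$-regular masks through the middle set $V$, and that the theorem's hypothesis of equal layer cardinalities holds. Both follow from the regularity of $L$. Simplicity of $L$ is not used in the argument; it appears in the statement only to match the setting of Theorem~\ref{thm:graph}.
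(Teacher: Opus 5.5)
Your proposal is correct and follows the paper's argument: apply Theorem~\ref{thm:composition} with $G=H=L^m$, identify rearrangeability of $L^{2m}$ with $\mathcal R(L^{2m})=\mathcal M_k(V,V)$, and read off the ``weakest hypothesis'' clause from the necessity and sufficiency of the factorization condition. Your explicit check of the theorem's hypotheses (equal finite layers, $k$-regularity, $L^mL^m=L^{2m}$) is accurate, as is your observation that simplicity of $L$ is never used.
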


\begin{proof}
The equality is exactly Theorem \ref{thm:composition} with $G=H=L^m$, together with the definition of rearrangeability. If the equality fails, a mask outside $\mathcal R(L^m)\star\mathcal R(L^m)$ is not routable in $L^{2m}$; if it holds, all masks are routable. Hence no strictly weaker middle-factorization condition can imply rearrangeability in this general class.
\end{proof}

The single-mask form of the same criterion is often easier to use. Let $A$ be a $k$-regular multistage graph from a source set $S$ to a middle set $U$, and let $B$ be a $k$-regular multistage graph from $U$ to a target set $T$, all three sets having the same cardinality. For a $k$-regular mask $M$ from $S$ to $T$, a balanced middle assignment is a map $z$ from the request multiset of $M$ to $U$ such that $|z^{-1}(u)|=k$ for every $u\in U$. It produces two $k$-regular masks: the left mask $M_A$ with requests $s(e)\to z(e)$ and the right mask $M_B$ with requests $z(e)\to t(e)$.

\begin{theorem}[Exact middle criterion]\label{thm:exactmiddle}
The concatenation $AB$ is rearrangeable if and only if every $k$-regular mask $M$ from $S$ to $T$ has a balanced middle assignment $z$ for which $M_A$ is routable in $A$ and $M_B$ is routable in $B$.
\end{theorem}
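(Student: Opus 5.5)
We plan to deduce Theorem \ref{thm:exactmiddle} from the mask-composition identity, Theorem \ref{thm:composition}, applied with $G=A$ and $H=B$. Alternatively, we can rerun its cut-and-concatenate argument directly for a single mask. The only real work is translating between a balanced composition $P\star Q$, with its pairing at each middle vertex, and a balanced middle assignment $z$ on the requests of a fixed mask $M$.

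For the direction ``rearrangeable implies assignment'', fix a $k$-regular mask $M$ and an edge-disjoint routing of it in $AB$. Each request $e$ determines a plain path, and we let $z(e)$ be the vertex at which that path meets the layer $U$. We will check that $z$ is balanced using the counting from the proof of Theorem \ref{thm:composition}. The mask has $k|S|=k|U|$ requests, and each stage has exactly $k|U|$ edges, so the edge-disjoint routing uses every edge of the last stage of $A$. Hence exactly $k$ paths enter each $u\in U$, giving $|z^{-1}(u)|=k$. Cutting each path at $U$ gives routings of $M_A$ in $A$ and of $M_B$ in $B$. These are edge-disjoint because they are restrictions of an edge-disjoint family.

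For the converse, let $M$ be any $k$-regular mask, and take a balanced $z$ for which $M_A$ and $M_B$ have edge-disjoint routings. Both $M_A$ and $M_B$ are indexed by the same request set of $M$. So for each request $e$ we concatenate the routed path for $s(e)\to z(e)$ with the routed path for $z(e)\to t(e)$; these meet at the same vertex $z(e)$. The two halves occupy disjoint stages, so the concatenated paths are edge-disjoint, and they route $M$ in $AB$. Since $M$ was arbitrary, $AB$ is rearrangeable.

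The main obstacle is a bookkeeping point: the paths must be matched per request and not merely per multiset. When parallel requests exist, the routings of $M_A$ and $M_B$ each assign paths to requests, and we must use the same labelling by the request multiset of $M$. Defining $M_A$ and $M_B$ as indexed by the requests of $M$, as the statement implicitly does, resolves this. Balancedness of $z$ is what makes $M_A$ and $M_B$ genuine $k$-regular masks, and so makes the hypothesis meaningful.
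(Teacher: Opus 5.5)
Your proposal is correct and follows the same route as the paper, which proves the criterion as Theorem~\ref{thm:composition} written request by request. In one direction you read off $z(e)$ from where each routed path meets the middle layer, with balancedness coming from the edge count; in the other you concatenate the two half-routings along the assigned middle vertices.
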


\begin{proof}
This is Theorem \ref{thm:composition} with the balanced composition written request by request. A routing in $AB$ supplies $z(e)$ by recording where the path for $e$ meets the middle layer. Conversely, routings of $M_A$ and $M_B$ concatenate along the assigned middle vertices.
\end{proof}

Theorem \ref{thm:exactmiddle} is the minimal statement at the level of a fixed middle cut: it is necessary and sufficient. Corollary \ref{cor:sharpdoubling} is the corresponding logically sharp doubled form. The following sufficient criterion is stronger, but useful because it can be verified without knowing the target mask in advance. It isolates the usual Bene\v{s} mechanism: color a regular mask into perfect matchings, route each color class through a balanced part of the middle layer, and concatenate the two half-routings.

\begin{lemma}\label{lem:konig}
Every $k$-regular bipartite multigraph with the same number of vertices on both sides decomposes into $k$ perfect matchings.
\end{lemma}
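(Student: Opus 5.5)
The plan is to prove Lemma \ref{lem:konig} by induction on $k$, using Hall's marriage theorem to peel off one perfect matching at a time. Let $H$ be a $k$-regular bipartite multigraph with sides $A$ and $B$, $|A|=|B|=n$. For $k=0$ there is nothing to prove. For $k=1$, $H$ is itself a perfect matching. For $k\ge2$, the goal is to find one perfect matching $P$ in $H$. Deleting the edges of $P$ then leaves a $(k-1)$-regular bipartite multigraph on the same vertex sets. By induction, that remainder decomposes into $k-1$ perfect matchings, and adding $P$ gives the required $k$.

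The main step is producing $P$, and I will verify Hall's condition by double counting. Let $T\subseteq A$ and let $N(T)\subseteq B$ be its neighbourhood in the underlying simple graph; parallel edges do not affect neighbourhoods. Exactly $k|T|$ edge-endpoints leave $T$, counted with multiplicity, and every such edge ends in $N(T)$. Each vertex of $N(T)$ has degree $k$, so it absorbs at most $k$ of these edges. Hence $k|T|\le k|N(T)|$, and for $k\ge1$ this gives $|N(T)|\ge|T|$. Hall's theorem, applied to the underlying simple bipartite graph, then yields a matching saturating $A$. Since $|A|=|B|$, this matching is perfect, and choosing one parallel copy for each of its edges gives a perfect matching $P$ of $H$.

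The only point needing care is that the multigraph setting is harmless. Hall's theorem depends only on adjacency, and removing a single chosen copy of each matched edge lowers every degree by exactly one. So regularity is preserved with degree $k-1$ on both sides, which is all the induction needs. I do not expect a real obstacle; the whole argument rests on the counting inequality $k|T|\le k|N(T)|$. Alternatively, the same conclusion follows from König's edge-colouring theorem: a bipartite multigraph of maximum degree $k$ is $k$-edge-colourable, and in a $k$-regular graph each colour class must be a perfect matching. I would cite that route only as a remark and keep the self-contained Hall induction as the proof.
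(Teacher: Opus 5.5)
Your proof is correct and follows the paper's argument: you verify Hall's condition by the double count $k|T|\le k|N(T)|$, extract a perfect matching, remove it to get a $(k-1)$-regular multigraph, and induct on $k$. Your extra care about parallel edges is sound and only makes explicit what the paper leaves implicit.
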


\begin{proof}
Hall's condition holds: if $A$ is a set of left vertices, the $k|A|$ edges incident with $A$ all end in its neighbor set, whose vertices have degree at most $k$, so $|N(A)|\ge |A|$. Thus there is a perfect matching. Removing it leaves a $(k-1)$-regular bipartite multigraph, and induction completes the proof.
\end{proof}

\begin{proposition}[First nontrivial level]\label{prop:firstlevel}
Let $L$ be a simple $k$-regular ordered two-stage graph on a vertex set $V$. If $F(L)=1$, then $R(L)\le2$. Consequently the graph counterexamples in Theorem \ref{thm:graph} occur at the smallest possible value of $F(L)$ for which the inequality $R(L)\le2F(L)$ can fail. Likewise, in the partition formulation, the Bene\v{s} implication is true for $n=2$, so Corollary \ref{cor:partition} starts at the first possible exponent.
\end{proposition}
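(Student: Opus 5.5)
The plan is to show first that $F(L)=1$ pins $L$ down completely, and then to route an arbitrary mask in $L^2$ by the Bene\v{s} colouring mechanism, using Lemma \ref{lem:konig} together with the exact middle criterion, Theorem \ref{thm:exactmiddle}.

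\emph{Structure of $L$.} If $F(L)=1$, every left vertex $a$ has an edge to every right vertex $b$. Because $L$ is simple, $a$ has exactly one edge to each $b\in V$, so $|V|=\deg^+(a)=k$. Hence $L$ is the complete bipartite graph $K_{k,k}$ with single edges, and it has $k^2$ edges. As in the proof of Theorem \ref{thm:composition}, a $k$-regular mask routable in $L$ must use every edge exactly once. So a $k$-regular mask on $V\times V$ is routable in $L$ if and only if it contains each pair $(a,b)$ exactly once.

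\emph{Routing in $L^2$.} Fix a $k$-regular mask $M$ from $V$ to $V$. By Lemma \ref{lem:konig}, decompose $M$ into perfect matchings $M_1,\dots,M_k$. Enumerate $V=\{u_1,\dots,u_k\}$ and define the middle assignment $z(e)=u_c$ for every request $e\in M_c$. This assignment is balanced, since each $M_c$ has exactly $k$ requests. Consider a source $s$: its $k$ requests lie in distinct matchings, so the left mask $M_A$ contains each pair $(s,u_c)$ exactly once. Symmetrically, for each target $t$ the right mask $M_B$ contains each pair $(u_c,t)$ exactly once. Both masks are therefore routable in $L$, and Theorem \ref{thm:exactmiddle} gives that $L^2$ is rearrangeable, so $R(L)\le2$.

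\emph{Consequences.} Theorem \ref{thm:graph} achieves $F=2$, and $R(L)\le2F(L)$ holds whenever $F=1$, so $F=2$ is the smallest value at which the inequality can fail.

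For the partition statement I would reuse the translation from the proof of Corollary \ref{cor:partition}. Let $\mathbf u$ be uniform with blocks of size $k$, and suppose $\mathbf u\wedge\varphi(\mathbf u)=\mathbf0$.
\begin{itemize}
\item The graph $L$ between $\mathbf u$ and $\varphi(\mathbf u)$ is $k$-regular. It is simple because blocks meet in at most one element.
\item By the canonical isomorphisms $\varphi^{-i}$, each consecutive pair $\varphi^i(\mathbf u),\varphi^{i+1}(\mathbf u)$ gives a copy of $L$.
\item The identity $(\varphi S_0)^m=S_1\cdots S_m\varphi^m$ shows that transitivity of $(\varphi S(\mathbf u))^2$ is equivalent to external connectivity of $L^1$.
\item The same identity, together with Lemma \ref{lem:routing} and the remark that uniform block size makes every $k$-regular mask of the form $M(\pi)$, shows that $(\varphi S(\mathbf u))^3=\Sym(E)$ is equivalent to rearrangeability of $L^2$.
\end{itemize}
Combining these with the first part gives the Bene\v{s} implication for $n=2$.

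\emph{Main obstacle.} I expect the only delicate point to be the identification $|V|=k$, which is exactly where simplicity is used. Without simplicity, $F(L)=1$ would not force $L=K_{k,k}$. The remaining steps are bookkeeping with results already proved.
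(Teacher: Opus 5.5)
Your proposal is correct and follows essentially the same route as the paper: you use simplicity to identify $L$ with the complete ordered bipartite graph on $|V|=k$ vertices, colour the mask into $k$ perfect matchings by Lemma~\ref{lem:konig}, and send colour class $c$ through the $c$th middle vertex; you then translate to the partition setting via $(\varphi S_0)^m=S_1\cdots S_m\varphi^m$ and Lemma~\ref{lem:routing}. The only cosmetic difference is that you certify edge-disjointness through Theorem~\ref{thm:exactmiddle}, after noting that a $k$-regular mask is routable in a single stage of this graph exactly when it uses each pair once, whereas the paper checks disjointness of the two half-routings directly.
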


\begin{proof}
If $F(L)=1$, then every source is adjacent to every target. Since $L$ is simple and $k$-regular, this means $k=|V|$ and $L$ is the complete ordered bipartite graph from one copy of $V$ to the next. Let $M$ be any $k$-regular mask. By Lemma \ref{lem:konig}, decompose $M$ into $k$ perfect matchings and index the middle vertices of $L^2$ as $z_1,\ldots,z_k$. Route the request $s\to t$ belonging to the $c$th perfect matching as $s\to z_c\to t$. For fixed $c$, the first-stage edges are distinct because the matching uses each source once, and the second-stage edges are distinct because it uses each target once; for different colors the middle vertices are different. Thus $L^2$ is rearrangeable.

For the partition statement, put $S_i=S(\varphi^i(\mathbf u))$. Let $\mathbf u$ be uniform, let $\varphi$ satisfy $\mathbf u\wedge\varphi(\mathbf u)=\mathbf0$, and assume $(\varphi S_0)^2$ is transitive. Since $(\varphi S_0)^2=S_1S_2\varphi^2$, this is equivalent, after multiplying by the fixed permutation $\varphi^{-2}$ and conjugating by $\varphi^{-1}$, to transitivity of $S_0S_1$. By the one-path equivalence proved above, the graph between $\mathbf u$ and $\varphi(\mathbf u)$ has $F=1$. The preceding paragraph makes the two-fold concatenation rearrangeable, and Lemma \ref{lem:routing} translates this into completeness of $\mathbf u,\varphi(\mathbf u),\varphi^2(\mathbf u)$. Equivalently, $S_0S_1S_2=\Sym(E)$, and conjugating the factors gives $(\varphi S_0)^3=\Sym(E)$.
\end{proof}

Let $G$ be a $k$-regular multistage graph from a set $V$ to a second copy of $V$, and let $\mathcal C=(C_1,\ldots,C_k)$ be any partition of $V$. Say that $\mathcal C$ is balanced if $|C_c|=|V|/k$ for every $c$. Say that $G$ is left-universal for $\mathcal C$ if every function $\sigma:V\times\{1,\ldots,k\}\to V$ satisfying $\sigma(v,c)\in C_c$ and $|\sigma^{-1}(z)|=k$ for every $z\in V$ determines a routable mask with requests $v\to\sigma(v,c)$. Say that $G$ is right-universal for $\mathcal C$ if every such function $\tau$ determines a routable mask with requests $\tau(v,c)\to v$.

\begin{theorem}[Balanced middle criterion]\label{thm:middle}
Let $L$ be a simple $k$-regular ordered two-stage graph on a vertex set $V$, and suppose $k$ divides $|V|$. If, for some $m$, the graph $L^m$ is both left-universal and right-universal for a balanced partition $\mathcal C=(C_1,\ldots,C_k)$ of $V$, then $L^{2m}$ is rearrangeable. Consequently $R(L)\le 2m$.
\end{theorem}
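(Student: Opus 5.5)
We verify the hypothesis of Theorem \ref{thm:exactmiddle} with $A=B=L^m$ and $S=U=T=V$, following the Bene\v{s} mechanism described before Lemma \ref{lem:konig}. Let $M$ be an arbitrary $k$-regular mask from $V$ to $V$. By Lemma \ref{lem:konig}, decompose $M$ into $k$ perfect matchings $\mu_1,\ldots,\mu_k$, viewed as permutations of $V$. Since $\mathcal C$ is balanced, $|C_c|=|V|/k$.

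The key step is to choose a balanced middle assignment $z$ adapted to $\mathcal C$. For each color $c$, we want to assign every request $v\to\mu_c(v)$ to a middle vertex in $C_c$, so that each vertex of $C_c$ receives exactly $k$ requests of color $c$. This is possible because color $c$ has $|V|=k|C_c|$ requests. The precise choice does not matter, so we may take any partition of $V$ into $|C_c|$ groups of size $k$ and send each group to a distinct vertex of $C_c$.

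Define $\sigma(v,c)=z(v\to\mu_c(v))$. Then $\sigma(v,c)\in C_c$, and $|\sigma^{-1}(u)|=k$ for every $u$, since every middle vertex lies in exactly one class $C_c$ and receives $k$ requests of that color. By left-universality of $L^m$, the left mask $M_A$ with requests $v\to\sigma(v,c)$ is routable. On the right, index requests by their target: the unique color-$c$ request ending at $w$ starts at $\mu_c^{-1}(w)$. Set $\tau(w,c)=z(\mu_c^{-1}(w)\to w)$. Because $\mu_c$ is a bijection, for fixed $c$ this is the same color-$c$ assignment reindexed by target. Hence $\tau(w,c)\in C_c$ and $|\tau^{-1}(u)|=k$. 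By right-universality, $M_B$ with requests $\tau(w,c)\to w$ is routable.

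Both $M_A$ and $M_B$ are $k$-regular. Each source (respectively target) appears once per color, and each middle vertex has degree $k$. So Theorem \ref{thm:exactmiddle} gives rearrangeability of $L^{2m}$, and hence $R(L)\le 2m$.

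The only delicate point is checking that the right mask matches the format of right-universality. Each target must appear exactly once per color with its middle vertex in $C_c$, and this is exactly where we use that the coloring consists of perfect matchings rather than arbitrary $1$-factors of the multigraph. Parallel requests in $M$ cause no trouble, since matchings are taken in the request multiset.
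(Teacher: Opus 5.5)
Your proposal is correct and follows essentially the same route as the paper: König-color $M$, fill the $k|C_c|=|V|$ slots of $C_c$ bijectively with the color-$c$ requests, read off $\sigma$ by source and $\tau$ by target, and apply left- and right-universality. The only cosmetic difference is that you conclude via Theorem~\ref{thm:exactmiddle} rather than concatenating half-paths directly, which amounts to the same thing. Your closing contrast between ``perfect matchings'' and ``arbitrary $1$-factors'' is muddled, since these are the same objects; the point you need is simply that each color class is a perfect matching, so $\tau(w,c)$ is well defined.
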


\begin{proof}
Let $M$ be any $k$-regular mask from the sources to the targets of $L^{2m}$. By Lemma \ref{lem:konig}, color the requests of $M$ with colors $1,\ldots,k$ so that each source and each target is incident with exactly one request of each color. For each color $c$, the color-$c$ requests are $|V|$ in number, while the vertices of $C_c$ provide $k|C_c|=|V|$ middle slots. Choose a bijection between the color-$c$ requests and these slots. If a request $e$ has source $s(e)$, target $t(e)$, color $c(e)$, and assigned middle vertex $z(e)\in C_{c(e)}$, then each middle vertex is assigned exactly $k$ requests.

Define $\sigma(s,c)=z(e)$, where $e$ is the unique color-$c$ request leaving $s$, and define $\tau(t,c)=z(e)$, where $e$ is the unique color-$c$ request entering $t$. The chosen slots make both functions admissible. Since $L^m$ is left-universal, the first half routes all requests $s(e)\to z(e)$ edge-disjointly. Since $L^m$ is right-universal, the second half routes all requests $z(e)\to t(e)$ edge-disjointly. Concatenating corresponding half-paths gives an edge-disjoint routing of $M$ through $L^{2m}$. Since $M$ was arbitrary, $L^{2m}$ is rearrangeable.
\end{proof}

\begin{remark}
The counterexamples show why external connectivity cannot replace the left- and right-universality hypotheses in Theorem \ref{thm:middle}. External connectivity provides one path between each ordered pair, but it gives no guarantee that the many paths required by the slot assignments can be chosen edge-disjointly. Corollary \ref{cor:sharpdoubling} and Theorem \ref{thm:exactmiddle}, rather than Theorem \ref{thm:middle}, give the logically minimal middle-layer statements.
\end{remark}

\begin{proposition}[Balance cannot be omitted]\label{prop:balance}
If the word ``balanced'' is removed from Theorem \ref{thm:middle}, while the definitions of left- and right-universality are kept unchanged, the resulting statement is false.
\end{proposition}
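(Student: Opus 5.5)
The plan is to show that without balance the universality hypotheses can hold \emph{vacuously}, and then to pick an $L$ that is known not to be rearrangeable.

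\textbf{Step 1: no admissible function exists for an unbalanced partition.} Let $\mathcal C=(C_1,\ldots,C_k)$ be any partition of $V$ with nonempty blocks, and suppose $\sigma:V\times\{1,\ldots,k\}\to V$ is admissible, meaning $\sigma(v,c)\in C_c$ and $|\sigma^{-1}(z)|=k$ for every $z\in V$. Fix a color $c$.
\begin{itemize}
\item The pairs $(v,c)$ with $v\in V$ are $|V|$ in number, and all of them are sent into $C_c$.
\item A pair $(v,c')$ with $c'\ne c$ is sent into $C_{c'}$, which is disjoint from $C_c$. So these are the only pairs landing in $C_c$.
\item The condition $|\sigma^{-1}(z)|=k$ says that $C_c$ receives exactly $k|C_c|$ pairs.
\end{itemize}
Hence $|V|=k|C_c|$ for every $c$, which is exactly balance. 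The same counting applies to right-universal functions $\tau$.

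So if $\mathcal C$ is unbalanced, there are no admissible $\sigma$ or $\tau$. Then every graph $G$ is trivially both left-universal and right-universal for $\mathcal C$.

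\textbf{Step 2: choose the graph.} Take $L=L_N$ from Theorem~\ref{thm:graph} with $N\ge3$. It is simple and $k$-regular with $k=N$, and $|V|=2N$, so $k$ divides $|V|$. Take $m=1$ and an unbalanced partition with nonempty blocks, for instance one block of size $N+1$ and $N-1$ singleton blocks. This is unbalanced because the balanced block size would be $2$.

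By Step~1, $L_N^1$ is both left- and right-universal for this $\mathcal C$. Every other hypothesis of Theorem~\ref{thm:middle} still holds. The modified statement would therefore give that $L_N^2$ is rearrangeable.

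\textbf{Step 3: contradiction.} Theorem~\ref{thm:graph} gives $R(L_N)\ge N\ge3>2$. Because $L_N^r$ is non-rearrangeable for all $r<N$, in particular $L_N^2$ is not rearrangeable. So the modified statement is false.

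The only delicate point is the convention on partitions. If empty blocks were allowed, one could simply take $\mathcal C=(V,\emptyset,\ldots,\emptyset)$. The choice above avoids relying on that convention, since it uses nonempty blocks and the counting of Step~1 alone. I expect Step~1 to be the main thing to state carefully; the remaining steps are immediate from Theorem~\ref{thm:graph}.
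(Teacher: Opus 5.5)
Your proof is correct and follows the paper's argument: an unbalanced partition makes both universality hypotheses vacuous via the count $|V|=k|C_c|$, and the bound $R(L_N)\ge N$ then contradicts the conclusion of the modified theorem. The differences are only cosmetic. You take $m=1$ with $N\ge3$ and block sizes $N+1,1,\ldots,1$, while the paper uses $L_N^4$ with $N\ge5$ and block sizes $1,3,2,\ldots,2$.
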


\begin{proof}
Take $N\ge5$ and $L=L_N$. Let $k=N$ and $|V|=2N$. Choose an unbalanced partition $\mathcal C=(C_1,\ldots,C_N)$ of $V$ into $N$ nonempty parts, for instance with sizes $1,3,2,\ldots,2$. For a function $\sigma:V\times\{1,\ldots,N\}\to V$ satisfying $\sigma(v,c)\in C_c$ and $|\sigma^{-1}(z)|=N$ for every $z$, one would have $|V|=\sum_{z\in C_c}|\sigma^{-1}(z)|=N|C_c|$ for each $c$. This forces $|C_c|=2$ for all $c$, contrary to the choice of $\mathcal C$. Hence no admissible $\sigma$ exists, and the corresponding left- and right-universality hypotheses are vacuous. Nevertheless $L_N^4$ is not rearrangeable, since $R(L_N)\ge N>4$ by Theorem \ref{thm:graph}. Thus the unbalanced version of the criterion would give a false conclusion.
\end{proof}

\begin{remark}
This proposition does not assert that the balanced-middle hypotheses are necessary for rearrangeability. They are not meant to be necessary. What is necessary and sufficient is the exact middle factorization in Theorem \ref{thm:exactmiddle}; balanced universality is a clean sufficient condition that implies it.
\end{remark}

\section{Length-three shuffle-exchange}\label{sec:se}

Let $A$ be an alphabet of size $k$, and let $A^n$ be the set of words of length $n$. In this section permutations act on the right. Thus $x^{\alpha\beta}=(x^\alpha)^\beta$. The shuffle is $(x_1x_2\cdots x_n)^\sigma=x_2\cdots x_nx_1$, and $\calG$ is the group of permutations preserving the first $n-1$ letters. One factor $\sigma\calG$ sends $x_1\cdots x_n$ to $x_2\cdots x_n*$, where the last letter is changed by an arbitrary permutation depending on $x_2\cdots x_n$. This is the standard directed shuffle-exchange routing convention. Let $d(k,n)$ be the least $d\ge2$ such that $\Sym(A^n)=(\sigma\calG)^d$ in this convention.

For $\pi\in\Sym(A^3)$ and $d\ge3$, a $d$-step route for $\pi$ is an assignment to each $x=x_1x_2x_3$ of a middle word $z(x)=z_1\cdots z_{d-3}$ such that, putting $y=\pi(x)$ and $S(x)=x_1x_2x_3z_1\cdots z_{d-3}y_1y_2y_3$, the map $x\mapsto S(x)_{t+1}S(x)_{t+2}S(x)_{t+3}$ is a bijection $A^3\to A^3$ for every $1\le t\le d-1$.

\begin{lemma}\label{lem:route-product}
For $d\ge3$, a permutation $\pi\in\Sym(A^3)$ belongs to $(\sigma\calG)^d$ if and only if it admits a $d$-step route.
\end{lemma}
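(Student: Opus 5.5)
We will prove Lemma \ref{lem:route-product} by unwinding the product $(\sigma\calG)^d$ one letter at a time, in the same spirit as Lemma \ref{lem:routing}. First we note what a single factor does. A factor $\sigma g$ with $g\in\calG$ sends $w=w_1w_2w_3$ to $w_2w_3c$, where $c=f_{w_2w_3}(w_3')$ and $f_{w_2w_3}$ is some permutation of $A$ depending only on the prefix $w_2w_3$. Here $w_3'$ is the last letter after shifting, namely $w_1$. Hence, as $g$ ranges over $\calG$, the elements of $\sigma\calG$ are exactly the bijections $A^3\to A^3$ of the form $w_1w_2w_3\mapsto w_2w_3c(w)$. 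Equivalently, these are the maps of the form $w\mapsto w_2w_3c$ that are bijective. Indeed, such a map is bijective iff, for each fixed $w_2w_3$, the rule $w_1\mapsto c$ is a permutation of $A$. That is precisely the freedom in $\calG$ after shifting.

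Given $\pi=h_1h_2\cdots h_d$ with $h_t\in\sigma\calG$, we construct the route. For each $x$ we track its orbit $x^{(0)}=x$ and $x^{(t)}=(x^{(t-1)})^{h_t}$. Each step shifts left by one letter and appends one new letter. So the sequence of appended letters, prepended by $x_1x_2x_3$, gives a word $S(x)$ of length $d+3$ whose length-3 windows are the $x^{(t)}$. The last window is $x^{(d)}=x^\pi=y$. Writing $S(x)=x_1x_2x_3z_1\cdots z_{d-3}y_1y_2y_3$, the window at position $t+1,t+2,t+3$ is $x^{(t)}$. Since $d\ge3$, the final three appended letters are exactly $y$, and the $z$'s are the first $d-3$ appended letters. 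Each map $x\mapsto x^{(t)}=x^{h_1\cdots h_t}$ is a bijection, which gives the window condition for $1\le t\le d-1$; the case $t=d$ is $\pi$ itself.

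Conversely, given a route, we define $h_t$ by $x^{(t-1)}\mapsto x^{(t)}$, using the windows of $S(x)$. Because $x\mapsto x^{(t-1)}$ is a bijection for every $t$, this is a well-defined bijection of $A^3$. The case $t-1=0$ is the identity, and the case $t=d$ is $\pi$, which we take as the endpoint. Each $h_t$ has the shift-and-append form $w_1w_2w_3\mapsto w_2w_3c$, since consecutive windows of a single word overlap in two letters. By the first paragraph, $h_t\in\sigma\calG$, and the product telescopes to $\pi$.

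The main point requiring care is the characterization of $\sigma\calG$ as all bijective shift-and-append maps. This depends on the right-action convention: $\sigma$ is applied first, so $\calG$ acts on $w_2w_3w_1$ and may permute the last letter arbitrarily as a function of the first two. The rest is bookkeeping. We must also check the boundary indexing: windows are indexed $t=0,\dots,d$ with $t=0$ being $x$ and $t=d$ being $y$, so only $1\le t\le d-1$ impose conditions.
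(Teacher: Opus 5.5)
Your proposal is correct and follows essentially the same argument as the paper. The forward direction reads the route off the appended letters of each packet's orbit, and the converse builds each factor from consecutive sliding windows. It then places that factor in $\sigma\calG$ via the fiber-wise permutation criterion, which you state up front as the characterization of $\sigma\calG$ as the bijective shift-and-append maps, while the paper applies the same criterion directly at each layer.
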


\begin{proof}
A product of $d$ shuffle-exchange factors gives, for each input packet, a sequence of $d+1$ states in which each state is obtained from the preceding one by deleting the first letter and appending a new letter. Reading the appended letters before the final output gives the middle word above, and every layer is bijective because every factor is a permutation of $A^3$.

Conversely, suppose the sliding-window bijections are given. Between two consecutive layers the transition has the form $abc\mapsto bc*$. For each fixed $bc$, the $k$ states $abc$ in the earlier layer and the $k$ states $bc*$ in the later layer are both present exactly once, so the transition on that fiber is a permutation of $A$. These fiber permutations define the required element of $\calG$ after the shuffle at each layer.
\end{proof}

\begin{proposition}\label{prop:upper-six}
For every $k\ge2$ and every $\pi\in\Sym(A^3)$, the permutation $\pi$ has a six-step route. Hence $d(k,3)\le6$.
\end{proposition}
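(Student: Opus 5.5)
The approach is to work entirely with Lemma \ref{lem:route-product}. For $d=6$ the middle word is $z=z_1z_2z_3$, and the five sliding windows of $S(x)=x_1x_2x_3z_1z_2z_3y_1y_2y_3$ are
\[
x_2x_3z_1,\quad x_3z_1z_2,\quad z_1z_2z_3,\quad z_2z_3y_1,\quad z_3y_1y_2 .
\]
Each must depend bijectively on $x$. Since $x\mapsto y=\pi(x)$ is a bijection, I will think of the $k^3$ packets $x$ as requests and assign the three letters $z_1,z_3,z_2$ one at a time. At each stage the remaining requirement should again be a degree-regular bipartite multigraph, so that Lemma \ref{lem:konig} applies.

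\emph{Outer letters.} The first window requires that, within each fiber $\{x: x_2x_3=ab\}$ of size $k$, the letters $z_1$ be distinct. Symmetrically, the last window requires that $z_3$ be distinct within each fiber $\{x: y_1y_2=ab\}$.
\begin{itemize}
\item For $z_1$, I form the $k$-regular bipartite multigraph $B_1$ with left vertices the pairs $x_2x_3$, right vertices the pairs $y_1y_2$, and one edge per packet $x$. By Lemma \ref{lem:konig} it splits into $k$ perfect matchings, and I let $z_1$ be the colour. This makes $x_2x_3z_1$ bijective. It also makes the colour classes balanced on the $y_1y_2$ side, which is the slack I want to keep for later.
\item For $z_3$, I use a second König colouring, chosen so that $z_3y_1y_2$ is bijective. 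Its graph should be between $y_1y_2$ and a pair that still leaves freedom for the middle, for instance $x_3z_1$.
\end{itemize}
The intent of these choices is that the counts needed in the next step are automatically regular: each pair $(x_3,z_1)$ is hit exactly $k$ times, each pair $(z_3,y_1)$ exactly $k$ times, and each pair $(z_1,z_3)$ exactly $k$ times. The first of these follows from bijectivity of $x_2x_3z_1$. The other two have to be built into the choice of the second colouring, which I would do by colouring a bipartite graph whose vertex classes are $x_3z_1$ and $y_1y_2$.

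\emph{Middle letter.} It remains to choose $z_2$ so that three windows are simultaneously bijective: $x_3z_1z_2$, $z_1z_2z_3$, and $z_2z_3y_1$. Each packet now carries three labels, $w=(x_3,z_1)$, $u=(z_1,z_3)$ and $v=(z_3,y_1)$, and every label value occurs exactly $k$ times. The letter $z_2$ must be injective on each $w$-class, on each $u$-class and on each $v$-class.

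This simultaneous three-way colouring is the step I expect to be the main obstacle, because it is not a bipartite edge-colouring in general. What I would try first is to exploit the shared coordinates. The labels $u$ and $w$ share $z_1$, and $u$ and $v$ share $z_3$. Fixing $z_1$ therefore splits the problem into independent subproblems. Within a fixed $z_1$-slice, the $w$-classes are indexed by $x_3$, the $u$-classes by $z_3$, and the $v$-classes by $(z_3,y_1)$. Since each $v$-class refines a $u$-class, I would arrange the second colouring so that within each slice every $v$-class is contained in a single $u$-class. Then the $v$-constraint is implied by the $u$-constraint within the slice, up to the interaction across slices, which I would handle by choosing $z_3$ in the previous step with this refinement in mind. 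Once that holds, only the $w$- and $u$-constraints remain. Within a slice they form a $k$-regular bipartite multigraph between $x_3$-values and $z_3$-values, and Lemma \ref{lem:konig} colours it with $z_2$.

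If this refinement cannot be arranged for arbitrary $\pi$, the fallback is to reorder the assignment: choose $z_2$ before $z_3$, using a König step on $(x_3z_1)$ versus $(y_1\text{-side data})$, and then determine $z_3$ last by a König step between $z_1z_2$ and $y_1y_2$. That ordering may make the remaining constraints bipartite.

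After all windows are checked, Lemma \ref{lem:route-product} gives $\pi\in(\sigma\calG)^6$, and hence $d(k,3)\le6$.
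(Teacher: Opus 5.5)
There is a genuine gap, and it is the step you flag yourself: choosing $z_2$. With $z_1$ from a K\"onig colouring and $z_3$ from a second K\"onig colouring, $z_2$ must be injective on three partitions of the packets, by $(x_3,z_1)$, by $(z_1,z_3)$ and by $(z_3,y_1)$. This is an edge-colouring problem for a $3$-partite $3$-uniform hypergraph, where each packet is the hyperedge joining its three classes. Lemma~\ref{lem:konig} does not apply to it, and you give no argument that it is solvable.

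The slice reduction does not close the gap. Inside a $z_1$-slice, the restricted $(z_3,y_1)$-classes are trivially contained in $(z_1,z_3)$-classes. But the window $z_2z_3y_1$ constrains whole $(z_3,y_1)$-classes, and these can straddle several slices. What you actually need is that every $(z_3,y_1)$-class lies in a single $z_1$-slice.

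That requirement pins $z_3$ down:
\begin{itemize}
\item On each $y_1y_2$-fibre, $z_1$ and $z_3$ each take every value exactly once. So the requirement forces $z_3=h_{y_1}(z_1)$ for permutations $h_\beta$ of $A$.
\item The count of each pair $(z_1,z_3)$ must be exactly $k$. This forces $(\beta,\gamma)\mapsto h_\beta(\gamma)$ to be a Latin square.
\end{itemize}
So $z_3$ must be a deterministic function of $(z_1,y_1)$, not a K\"onig colour. Your concrete proposal, colouring $x_3z_1$ against $y_1y_2$, works against this. It makes $x_3z_1z_3$ injective, which no window requires. For $\pi(x_1x_2x_3)=x_3x_1x_2$ we have $y_1=x_3$, which is constant on each $(x_3,z_1)$-class, so that injectivity is incompatible with $z_3=h_{y_1}(z_1)$. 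The fallback ordering is only a hope, not an argument.

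The missing idea is this Latin-square choice. Identify $A$ with $\Z/k\Z$, keep your first colouring $z_1$ of $x_2x_3$ against $y_1y_2$, and put $z_3=z_1+y_1$.
\begin{itemize}
\item The $(z_1,z_3)$-classes and the $(z_3,y_1)$-classes then both coincide with the $(z_1,y_1)$-classes. So the windows $z_1z_2z_3$ and $z_2z_3y_1$ impose the same condition.
\item The window $z_3y_1y_2$ is equivalent to $z_1y_1y_2$. This is already bijective, because each $y_1y_2$-fibre contains exactly one packet of each colour.
\item In the slice $z_1=\gamma$, the packets form a bipartite multigraph from $x_3$-values to $y_1$-values. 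It is $k$-regular: degree $k$ at $x_3$ because $x_2x_3z_1$ is bijective, and degree $k$ at $y_1$ because each of the $k$ fibres $y_1y_2$ contributes one packet of colour $\gamma$.
\item A K\"onig colouring of each slice gives $z_2$. This makes $x_3z_1z_2$ and $z_1y_1z_2$ bijective, which finishes all five windows.
\end{itemize}
This completed argument is the time-reversal of the paper's proof. The paper takes the first colouring as the last middle letter $w=z_3$, colours $c$ against $d$ inside each $w$-class to get $v=z_2$, and sets $z_1=u=w+c$.
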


\begin{proof}
Write an input as $abc$ and its output as $\pi(abc)=def$. Form the bipartite multigraph $M$ with left vertices $L_{bc}$, right vertices $R_{de}$, and one edge for each packet $abc\mapsto def$, joining $L_{bc}$ to $R_{de}$. The graph is $k$-regular: for fixed $bc$ there are $k$ choices of $a$, and for fixed $de$ there are $k$ choices of $f$.

By Lemma \ref{lem:konig}, decompose $M$ into perfect matchings $M_w$, $w\in A$. Identify $A$ with $\Z/k\Z$ for this paragraph. Fix $w$ and collapse $M_w$ to a bipartite multigraph $N_w$ with left vertices $c\in A$ and right vertices $d\in A$ by sending an edge $L_{bc}R_{de}$ to an edge $cd$. Since $M_w$ covers every $L_{bc}$ and every $R_{de}$ once, $N_w$ is again $k$-regular. Decompose $N_w$ into perfect matchings indexed by $v\in A$.

Label a packet edge lying in $M_w$ and in the $v$-matching of $N_w$ by the middle word $uvw$, where $u=w+c$. The five nontrivial windows are $bcu$, $cuv$, $uvw$, $vwd$, and $wde$. They are bijective: $bcu$ determines $w=u-c$ and then the unique edge of $M_w$ at $L_{bc}$; $cuv$ and $uvw$ determine $w$ and $c$, then use the unique edge of the $v$-matching of $N_w$ at $c$; $vwd$ uses the unique edge of that matching at $d$; and $wde$ uses the unique edge of $M_w$ at $R_{de}$. Lemma \ref{lem:route-product} gives the required factorization.
\end{proof}

For completeness we exclude shorter lengths.

\begin{lemma}\label{lem:lower-five}
For every $k\ge2$, not every permutation of $A^3$ is routable in at most four steps. Thus $d(k,3)\ge5$.
\end{lemma}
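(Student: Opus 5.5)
We exhibit a single permutation $\pi\in\Sym(A^3)$ that lies in none of $(\sigma\calG)^2$, $(\sigma\calG)^3$, $(\sigma\calG)^4$. Since $d(k,3)$ is defined as the least $d\ge2$ with $\Sym(A^3)=(\sigma\calG)^d$, this gives $d(k,3)\ge5$. For $d=3,4$ we use Lemma \ref{lem:route-product} and a counting obstruction on one sliding window. The case $d=2$ lies outside that lemma, so we treat it directly from the definition of a factor.

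\textbf{Choice of $\pi$.} Fix a letter $0\in A$ and a second letter $1\ne0$. We require two properties.
\begin{enumerate}
\item[(i)] Every input $x$ with $x_3=0$ has output with $y_1=0$.
\item[(ii)] $\pi(000)=000$ and $\pi(100)=001$.
\end{enumerate}
There are exactly $k^2$ inputs with $x_3=0$ and exactly $k^2$ outputs with $y_1=0$. Choose any bijection between these two sets that satisfies (ii); this is possible since $000$ and $001$ both begin with $0$. Extend it to a permutation of $A^3$ by any bijection between the remaining inputs and the remaining outputs.

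\textbf{Cases $d=3,4$.} For $d=3$ the window at $t=2$ is $x_3y_1y_2$. For $d=4$, with middle letter $z$, the window at $t=2$ is $x_3zy_1$. In either case bijectivity of this window implies the following. For each fixed pair of letters in the positions occupied by $x_3$ and $y_1$, exactly $k$ packets occur, because the remaining position ranges over $A$. By (i), however, all $k^2>k$ packets with $x_3=0$ have $y_1=0$. So no $3$- or $4$-step route exists, and Lemma \ref{lem:route-product} gives $\pi\notin(\sigma\calG)^3\cup(\sigma\calG)^4$.

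\textbf{Case $d=2$.} Argue directly from the shape of a factor. Under two factors, a packet passes through the states $x_1x_2x_3\mapsto x_2x_3a\mapsto x_3ab$. The middle layer is a bijection, so the packets $000$ and $100$, which share $x_2x_3=00$, must receive distinct appended letters $a$. The second letter of the output equals $a$, so $\pi(000)$ and $\pi(100)$ must differ in their second letter. By (ii) both outputs have second letter $0$, so $\pi\notin(\sigma\calG)^2$.

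The only delicate point is making a single $\pi$ defeat all three lengths at once. Property (ii) is chosen to be compatible with (i), and that compatibility is the one thing to check.
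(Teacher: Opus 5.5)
Your proof is correct. For $d=3,4$ it uses the same mechanism as the paper: a sliding window containing both $x_3$ and $y_1$ has room for only $k$ packets with $x_3=y_1=0$, while $\pi$ sends more packets there. For $d=4$ the paper uses the central window $0z0$ exactly as you do. For $d=3$ it uses the first window $x_2x_3y_1$ with just the $k$ inputs $a00$.

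You differ from the paper in two ways. First, the paper takes a separate permutation for each length, which is enough for $(\sigma\calG)^d\ne\Sym(A^3)$ for each $d\le4$. You build one $\pi$ that defeats $d=2,3,4$ at once, so you also prove the literal reading of the lemma: a single permutation that cannot be routed in at most four steps.

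Second, the two-step case. The paper notes that two factors force $y_1=x_3$. That obstruction cannot work for your $\pi$. Condition (i) alone does not rule out $y_1=x_3$, and when $k=2$ it forces $y_1=x_3$ for every input, because the inputs with $x_3=1$ must go to the outputs with $y_1=1$. You replace it with injectivity of the first factor on the fiber $x_2x_3=00$. This forces $\pi(000)$ and $\pi(100)$ to differ in their second letter, and condition (ii) is designed to violate exactly that.

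Both routes are equally elementary. Yours pays one extra constraint on $\pi$ and gets a single witness for all three lengths.
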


\begin{proof}
For two steps the first letter of the final state is forced to be the third letter of the input. For three steps, choose a permutation which maps the $k$ inputs $a00$, $a\in A$, to outputs whose first letter is $0$; the first intermediate layer would contain the same state $000$ for all these packets. For four steps, choose a permutation which maps all $k^2$ inputs $ab0$ to outputs whose first letter is $0$. With one middle letter $z$, the central window is $0z0$, which has only $k$ possible values for these $k^2$ packets.
\end{proof}

It remains to prove that five steps do not always suffice when $k\ge3$. Put $A=\Z/k\Z$. A five-step route for $abc\mapsto def$ assigns a middle word $uv$, and the four nontrivial windows are $bcu$, $cuv$, $uvd$, and $vde$. Equivalently, in the multigraph on vertices $L_{bc}$ and $R_{de}$, every edge must be colored by a pair $(u,v)\in A^2$ so that at each $L_{bc}$ the $u$-values are all distinct, in each fixed input block $c$ all $k^2$ colors occur once, in each fixed output block $d$ all $k^2$ colors occur once, and at each $R_{de}$ the $v$-values are all distinct.

For a left vertex $L$, let $F_L\subseteq A^2$ be the set of colors on its incident edges; define $H_R$ similarly for a right vertex. If there are $k$ parallel edges from $L$ to $R$, then $F_L=H_R$. This common set is a perfect matching in $A\times A$: it has one point with each first coordinate and one point with each second coordinate. We shall use the elementary fact that two perfect matchings in $A\times A$ sharing $k-1$ points are equal, since the last unused row and column force the final point.

Construct a $k$-regular bipartite multigraph $M_k$ as follows. First, for every $c\in A$ and $0\le b\le k-2$, put $k$ parallel edges $L_{bc}\to R_{cb}$, and for every $c\in A$ put $k$ parallel edges $L_{k-1,c}\to R_{c+1,k-1}$, with $c+1$ modulo $k$. Then replace the full edge $L_{k-1,0}\to R_{1,k-1}$ and the full edge $L_{0,2}\to R_{2,0}$ by
\[
(k-1)(L_{k-1,0}\to R_{1,k-1})+(L_{k-1,0}\to R_{2,0})+(k-1)(L_{0,2}\to R_{2,0})+(L_{0,2}\to R_{1,k-1}).
\]
Every vertex has degree $k$: before the replacement each vertex lay in exactly one full $k$-fold edge, and the replacement restores degree $k$ at the four affected vertices.

\begin{proposition}\label{prop:obstruction}
The multigraph $M_k$ has no five-step coloring.
\end{proposition}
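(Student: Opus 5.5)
The plan is to turn the coloring conditions into set identities among the color sets $F_L$ and $H_R$, and then to show that these identities cannot all hold at the four vertices touched by the replacement. Throughout, $k\ge3$, so the blocks indexed $0,1,2$ are distinct; this is the case needed for $d(k,3)\ge 6$. Suppose a five-step coloring of $M_k$ exists. Two families of constraints will be used.

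\emph{Input blocks.} For each input block $c$, the sets $F_{L_{bc}}$ for $b\in A$ partition $A^2$, since each of the $k^2$ colors occurs exactly once in that block.

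\emph{Output blocks.} For each output block $d$, the sets $H_{R_{de}}$ for $e\in A$ partition $A^2$.

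\emph{Full vertices.} At an unmodified vertex carrying a full $k$-fold edge $L\to R$ we have $F_L=H_R$, and by the remark before the construction this set is a perfect matching of $A\times A$. The full edges are $L_{bc}\to R_{cb}$ for $b\le k-2$, except $L_{0,2}\to R_{2,0}$, and $L_{k-1,c}\to R_{c+1,k-1}$, except $c=0$.

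First I propagate along the cyclic chain of edges $L_{k-1,c}\to R_{c+1,k-1}$. Take any block index $c\ne2$. Input block $c$ and output block $c$ contain the same matchings $F_{L_{bc}}=H_{R_{cb}}$ for $b\le k-2$. Taking complements in $A^2$ therefore gives
\[
F_{L_{k-1,c}}=H_{R_{c,k-1}}\qquad (c\ne 2).
\]
Combining this with $H_{R_{c+1,k-1}}=F_{L_{k-1,c}}$ for $c\ne0$, I expect to get two equalities.

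\emph{The set $X$.} Following the chain from $c=2$ upward to $k-1$ and then back to $0$ should give $F_{L_{k-1,0}}=F_{L_{k-1,2}}=\cdots=F_{L_{k-1,k-1}}$. Call this common set $X$.

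\emph{The set $Y$.} Separately, $H_{R_{1,k-1}}=F_{L_{k-1,1}}=H_{R_{2,k-1}}$. Call this common set $Y$.

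Both $X$ (through the full vertex $L_{k-1,2}$) and $Y$ (through the full vertex $L_{k-1,1}$) are perfect matchings. The indices must be checked carefully here: one needs $k\ge3$, and one needs $b=0\le k-2$ so that $L_{0,2}$ is really one of the "first type" vertices.

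Next I analyze the four modified vertices.

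\emph{Edges at $L_{k-1,0}$.} Let $S$ be the set of $k-1$ colors on its parallel edges to $R_{1,k-1}$, and let $\alpha$ be the color of its single edge to $R_{2,0}$.

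\emph{Edges at $L_{0,2}$.} Let $T$ be the set of colors on its $k-1$ edges to $R_{2,0}$, and let $\beta$ be the color of its single edge to $R_{1,k-1}$.

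This gives
\[
X=F_{L_{k-1,0}}=S\cup\{\alpha\},\qquad Y=H_{R_{1,k-1}}=S\cup\{\beta\},\qquad F_{L_{0,2}}=T\cup\{\beta\}.
\]
Now $X$ and $Y$ are perfect matchings sharing the $k-1$ points of $S$, so the elementary fact stated before the construction forces $X=Y$, and hence $\beta=\alpha\in X$. But $X=F_{L_{k-1,2}}$ and $F_{L_{0,2}}$ both lie in input block $2$, and $0\ne k-1$. They must therefore be disjoint, yet both contain $\beta$. This is the contradiction.

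The main obstacle is the bookkeeping in the propagation step. I must verify that every vertex used in the chains is genuinely unmodified. I must also confirm that the single exceptional block $c=2$ (where $L_{0,2}$ and $R_{2,0}$ live) and the single exceptional edge at $c=0$ break the cyclic chain in exactly the way that separates $X$ from $Y$, while still letting both be identified with full-vertex matchings. Once those index checks are done, the rest is the short matching argument above.
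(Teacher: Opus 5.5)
Your proof is correct and follows the paper's argument. You propagate along the chain of full edges $L_{k-1,c}\to R_{c+1,k-1}$, use the $k-1$ parallel edges and the rigidity of perfect matchings to force $F_{L_{k-1,0}}=H_{R_{1,k-1}}$ (the paper's $X_1=X_2$), and then get a clash inside input block $2$. The only difference is the final step: you use the single edge $L_{0,2}\to R_{1,k-1}$ together with disjointness within input block $2$, whereas the paper first compares input and output block $2$ to get $F_{L_{0,2}}=H_{R_{2,0}}$ and then uses the edge $L_{k-1,0}\to R_{2,0}$, so your ending is slightly shorter.
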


\begin{proof}
Suppose such a coloring exists. For the undeleted full edges write $B_{c,b}=F_{L_{bc}}=H_{R_{cb}}$ for $0\le b\le k-2$, and $X_c=F_{L_{k-1,c}}=H_{R_{c+1,k-1}}$ for $c\ne0$, omitting only $B_{2,0}$ and $X_0$. Each displayed set is a perfect matching in $A\times A$.

For every $c\notin\{0,1,2\}$, the input block $c$ and the output block $c$ have the same parts $B_{c,0},\ldots,B_{c,k-2}$; their remaining parts are $X_c$ and $X_{c-1}$, respectively. Hence $X_c=X_{c-1}$, and therefore $X_{k-1}=X_2$, with the assertion tautological when $k=3$.

Let $X_L=L_{k-1,0}$, $X_R=R_{1,k-1}$, $Y_L=L_{0,2}$, and $Y_R=R_{2,0}$. Comparing input block $0$ with output block $0$ gives $F_{X_L}=X_{k-1}=X_2$, and comparing input block $1$ with output block $1$ gives $H_{X_R}=X_1$. Since there are $k-1$ parallel edges from $X_L$ to $X_R$, the matchings $X_2$ and $X_1$ share at least $k-1$ colors. Thus $X_1=X_2$; call this matching $X$.

Now compare input block $2$ and output block $2$. Both partitions contain $X$ and the common parts $B_{2,b}$ for $1\le b\le k-2$. Their remaining parts are $F_{Y_L}$ and $H_{Y_R}$, so these are equal; call the common set $Y$. Since $X$ and $Y$ occur as two parts of the same partition of $A^2$, they are disjoint. But the single edge $X_L\to Y_R$ would need a color in $F_{X_L}\cap H_{Y_R}=X\cap Y$, a contradiction.
\end{proof}

Because $M_k$ is $k$-regular, it lifts to an actual permutation of $A^3$: enumerate the $k$ edges incident with each $L_{bc}$ by $a\in A$, enumerate the $k$ edges incident with each $R_{de}$ by $f\in A$, and define $\pi(a,b,c)=(d,e,f)$ when the corresponding edge joins $L_{bc}$ to $R_{de}$. A five-step route for this $\pi$ would give a forbidden coloring of $M_k$.

\begin{theorem}\label{thm:mainse}
For every $k\ge3$, one has $d(k,3)=6$.
\end{theorem}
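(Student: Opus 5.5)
The proof splits into an upper bound and a lower bound, and both are already essentially in place. For the upper bound, Proposition \ref{prop:upper-six} shows that every $\pi\in\Sym(A^3)$ has a six-step route for every $k\ge2$. By Lemma \ref{lem:route-product} this gives $\Sym(A^3)=(\sigma\calG)^6$, so $d(k,3)\le6$.

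For the lower bound I will show $(\sigma\calG)^d\ne\Sym(A^3)$ for every $2\le d\le5$. The cases $d\le4$ are Lemma \ref{lem:lower-five}. For $d=2$ its argument applies directly. For $d=3,4$ the witnessing permutation is a legitimate element of $\Sym(A^3)$: choose any bijection sending the prescribed $k$ (respectively $k^2$) inputs to outputs with first letter $0$. This is possible because there are $k^2$ words beginning with $0$, and the map extends arbitrarily on the remaining words. For $d=3$ and $d=4$ the route formalism of Lemma \ref{lem:route-product} applies, so the window-collision arguments exclude these lengths.

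For $d=5$, let $\pi$ be the permutation lifted from $M_k$ as described after Proposition \ref{prop:obstruction}. Suppose $\pi\in(\sigma\calG)^5$. By Lemma \ref{lem:route-product} there would be a five-step route, that is, middle words $uv$ making the four windows $bcu$, $cuv$, $uvd$, $vde$ bijective. I will then check the translation to a coloring of $M_k$. Bijectivity of $bcu$ means that at each $L_{bc}$ the $k$ packets get distinct $u$. Bijectivity of $cuv$ means that within each input block $c$ the $k^2$ packets receive all pairs $(u,v)$ exactly once. The same reasoning for $uvd$ and $vde$ gives the output-block and $R_{de}$ conditions. So the route would produce a five-step coloring of $M_k$, contradicting Proposition \ref{prop:obstruction}. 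The hypothesis $k\ge3$ is used here: the construction of $M_k$ refers to distinct vertices $L_{0,2}$, $R_{2,0}$, $L_{k-1,0}$, $R_{1,k-1}$ and to blocks $0,1,2$, which requires $k\ge3$.

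Combining the bounds, $(\sigma\calG)^d\ne\Sym(A^3)$ for $d\le5$ and equality holds at $d=6$, so the least such $d$ is $6$. One point needs care, since $d(k,n)$ is defined as the least $d$ with equality and not as a threshold. Exhibiting failure at each $d\le5$ suffices for the minimum regardless of monotonicity. Monotonicity also holds, because $\calG$ contains the identity and $\sigma^3=1$ on $A^3$, but it is not needed. The main obstacle is the faithful translation between five-step routes and colorings in the $d=5$ case, specifically confirming that the block conditions are exactly bijectivity of the windows $cuv$ and $uvd$. Everything else is citation.
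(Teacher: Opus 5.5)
Your proof is correct and is the paper's proof: the upper bound comes from Proposition~\ref{prop:upper-six}, the cases $d\le4$ are excluded by Lemma~\ref{lem:lower-five}, and $d=5$ is excluded by the permutation lifted from $M_k$ together with Proposition~\ref{prop:obstruction}; your check that the window conditions translate into the coloring conditions only spells out what the paper states. One small correction to your monotonicity aside: $1\in\calG$ and $\sigma^3=1$ give only $(\sigma\calG)^d\subseteq(\sigma\calG)^{d+3}$, and containment from $d$ to $d+1$ actually fails (elements of $(\sigma\calG)^2$ satisfy $y_1=x_3$, so the first window $x_2x_3y_1$ of a three-step route cannot be bijective), although persistence of fullness is immediate from $\Sym(A^3)\,\sigma\calG=\Sym(A^3)$ and, as you say, is not needed.
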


\begin{proof}
Lemma \ref{lem:lower-five} gives $d(k,3)\ge5$. Proposition \ref{prop:obstruction} and the preceding lift show that $d(k,3)$ is not $5$, so $d(k,3)\ge6$. Proposition \ref{prop:upper-six} gives $d(k,3)\le6$.
\end{proof}

Thus the shuffle-exchange conjecture $d(k,n)=2n-1$ fails for every $k\ge3$ already at word length three. For comparison, the known binary value is $d(2,3)=5$; see \cite{OPGshuffle,RaghavendraVarma87,LinialTarsi89,NgoDu01}.

\section{Literature and applications}

The explicit graph family should be read with a cautious priority convention. We have not found the graphs $L_N$, nor the conclusion $F(L_N)=2$ with unbounded $R(L_N)$, in the accessible records cited here. The Open Problem Garden pages state the partition and graph formulations as open, and the graph page formulates the question as whether external connectivity of $L^m$ forces rearrangeability of $L^{2m}$ for simple regular ordered two-stage graphs \cite{OPGpart,OPGgraph}. Hwang's 1989 abstract is important context: it says that Bene\v{s} published the conjecture in a more general setting and that a mathematical abstraction also shows that no number of stages guarantees a nonblocking network \cite{Hwang89}. Since that statement is close in spirit but not identical to the rearrangeability assertion treated here, this paper makes no priority claim beyond giving explicit counterexamples to the stated Open Problem Garden and UnsolvedMath formulations.

The classical Bene\v{s} network itself is not contradicted. Bene\v{s}' original book and later group-calculation proof concern structured connecting networks whose middle symmetry is much stronger than arbitrary external connectivity \cite{Benes65,Benes75}. Theorem \ref{thm:graph} instead shows that the broad graph-theoretic and partition-stabilizer extensions need additional assumptions. Proposition \ref{prop:cut}, Theorem \ref{thm:exactmiddle}, and Theorem \ref{thm:middle} indicate the type of assumptions that can work: a useful replacement must control balanced traffic through cuts or through a prescribed middle layer, not merely guarantee one path between each ordered pair.

The shuffle-exchange problem is more structured. Stone's perfect-shuffle work is an early source for binary networks \cite{Stone71}. Before Theorem \ref{thm:mainse}, the Open Problem Garden page recorded the exact cases $d(k,2)=3$, $d(2,3)=5$, and $d(2,4)=7$, the universal lower bound $d(k,n)\ge2n-1$, the recurrence $d^{(*)}(k,n)\le d^{(*)}(k,r)+3(n-r)$, and hence $d(k,n)\le3n-3$ for $k\ge3$ \cite{OPGshuffle}. It also states that the graph-theoretic parameter $r(k,n)$ equals $d(k,n)$ \cite{OPGsegraph}. Work on the conjecture and its constructive versions includes Raghavendra--Varma, Varma--Raghavendra, Linial--Tarsi, Kim--Raghavendra, Ngo--Du, Bashirov, \c{C}am, Bao--Hwang--Li, and Dai--Shen \cite{RaghavendraVarma87,VarmaRaghavendra88,LinialTarsi89,KimRaghavendra91,Raghavendra95,NgoDu01,Bashirov01,Cam03,BaoHwangLi06,DaiShen08}. The Open Problem Garden page records flawed or refuted proposed proofs in this line, and Bao--Hwang--Li explicitly refuted the claimed binary proof of \c{C}am \cite{BaoHwangLi06,OPGshuffle}.

Theorem \ref{thm:mainse} changes the first unsettled non-binary case from $5\le d(3,3)\le6$ to the exact value $d(3,3)=6$, and proves the same value for every alphabet size $k\ge3$. Since the known upper bound for $k\ge3$ is $3n-3$, a natural replacement question is whether $d(k,n)=3n-3$ for all $k\ge3$ and $n\ge2$. This paper proves only the base case $n=3$ of that possible formula. The first new cases include $d(3,4)$ and $d(4,4)$; for the latter the general bounds give $7\le d(4,4)\le9$. The proof here is intrinsically two-coordinate: a full bundle at length three pins a perfect matching in $A^2$, and two such matchings sharing $k-1$ points are equal; for longer words, extra middle coordinates can absorb this rigidity unless a new pinning mechanism is introduced.

Later work on banyan-type networks gives sufficient conditions, SAT models, and empirical searches for rearrangeable configurations \cite{LiTan09,Ohta21,Ohta23}. These papers address highly constrained network families, so they do not remove the bottleneck obstruction in the fully general class considered here. The length-three shuffle-exchange obstruction is also separate from the bottleneck family: it lives inside the homogeneous shuffle-exchange network and is caused by incompatible perfect matchings rather than by a visible source-target cut.

The connection with machine learning is indirect but concrete. In expert-parallel mixture-of-experts models, tokens are routed to the devices that hold their selected experts, which induces all-to-all communication patterns across accelerators \cite{GShard,Switch22,DeepSpeedMoE}. The results here give worst-case combinatorial warnings for such systems: small diameter, transitivity, or pairwise reachability is not enough to support balanced all-to-all traffic; one also needs cut capacity, middle-layer factorization, or a well-linkedness condition strong enough to route many edge-disjoint paths at once. The paper proves no new training algorithm, but it supplies compact obstruction tests for topological bottlenecks and for local shuffle-exchange rigidity, both of which can inform interconnect design, expert placement, and routing stress tests.

\end{document}